\NeedsTeXFormat{LaTeX2e}
\documentclass[draft,12pt]{article}

\usepackage{a4}
\usepackage[english]{babel}                 
\usepackage{amsmath}                       
\usepackage{amssymb}                       
\usepackage{amsfonts}                      
\usepackage{enumerate}                     
\usepackage{amsthm}                        
\usepackage{hyperref}
\usepackage{epsf}
\usepackage[T1]{fontenc}
\usepackage[ansinew]{inputenc}


\setlength{\parindent}{0ex}
\setlength{\parskip}{1ex}


\usepackage[T1]{fontenc}
\usepackage[ansinew]{inputenc}
\usepackage{amsmath}                       
\usepackage{amssymb}                       
\usepackage{amsfonts}                      
\usepackage{amsthm}     
\usepackage{enumerate}                     

\newcommand{\TTT}{\mathbb T}
\newcommand{\RR}{\mathbb R}
\newcommand{\NN}{\mathbb N}
\newcommand{\ZZ}{\mathbb Z}

\newcommand{\cA}{\mathcal{A}}

\newcommand{\cV}{\mathcal{V}}

\newcommand{\cP}{\mathcal{P}}

\newcommand{\cU}{\mathcal{U}}
\newcommand{\eps}{\varepsilon}

\newcommand{\abs}[1]{\left\vert#1\right\vert}

\newcommand{\nabs}[1]{\vert#1\vert}
\newcommand{\bigabs}[1]{\big\vert#1\big\vert}

\newcommand{\measN}[1]{\abs{#1}}


\newcommand{\norm}[1]{\left\|#1\right\|}
\newcommand{\bignorm}[1]{\big\|#1\big\|}
\newcommand{\nnorm}[1]{\|#1\|}
\newcommand{\dist}[2] {\operatorname{dist}\left(#1;#2\right)}

\newcommand{\mysetr}[2] {\left\{#1\,\left|\,#2\right.\right\}}
\newcommand{\mysetl}[2] {\left\{\left.#1\,\right|\,#2\right\}}
\newcommand{\bigset}[2] {\big\{#1\,\big|\,#2\big\}}

\newcommand{\loc}{\text{loc}}

\newcommand{\supp}{\operatorname{supp}}
\renewcommand{\div}{\operatorname{div}}
\newcommand{\Div}{\operatorname{Div}}


\newcommand{\co}{\operatorname{co}}

\newcommand{\To}{\longrightarrow}

\newcommand{\rank}{\operatorname{rank}}

\renewcommand{\labelenumi}{(\roman{enumi})}
\newenvironment{myenum}{\vspace*{-1ex}\begin{enumerate}}{\end{enumerate}\vspace*{-1ex}}

\newtheorem{theorem}{Theorem}

\newtheorem{thm}[theorem]{Theorem}

\newtheorem{lem}[theorem]{Lemma}
\newtheorem{prop}[theorem]{Proposition}
\theoremstyle{definition}

\newtheorem{ex}[theorem]{Example}
\theoremstyle{remark}
\newtheorem{rem}[theorem]{Remark}
\renewenvironment{proof}[1][\proofname]{\par
  \normalfont
  \trivlist
  \item[\hskip\labelsep\itshape
    \bfseries{#1.}]\ignorespaces
}{%
  \qed\endtrivlist
}

\hyphenation{%
}

\author{
Stefan Krömer\\
\small
Universität zu Köln,\\
\small
skroemer@math.uni-koeln.de
}
\date{20 April 2010
}
\title{Dimension reduction for functionals on solenoidal vector fields
}%

\begin{document}
\maketitle
\begin{abstract}
We study integral functionals constrained to divergence-free vector fields in $L^p$ on a thin domain, 
under standard $p$-growth and coercivity assumptions, $1<p<\infty$. We prove that as the thickness of the domain goes to zero, the Gamma-limit 
with respect to weak convergence in $L^p$ is always given by the associated functional with convexified energy density wherever it is finite.
Remarkably, this happens despite the fact that relaxation of nonconvex functionals subject to the limiting constraint can give rise to a nonlocal functional as illustrated
in an example.\\[1ex]
{MSC 2000: primary 49J45, secondary 35E99}\\
{Keywords: divergence-free fields, Gamma-convergence, dimension reduction}
\end{abstract}

\section{Introduction}\label{sec:intro}

This article is devoted to the study the ``effective'' value per unit volume of functionals
constrained to solenoidal (i.e., divergence-free) vector 
fields defined on a thin domain $\omega\times (0,\eps)$, in the limit as the thickness $\eps$ goes to zero.
We assume that on a domain with finite thickness, our functional (which we call the ``energy'', although its meaning might be different from a physical point of view) 
is given by 
a integral of the form
\begin{align*}
		G_\eps(v):=\left\{\begin{alignedat}{2}
		&\frac{1}{\eps}\int_{\omega\times (0,\eps)}
		g\big(y',v(y)\big)
		\,dy~~&&\text{if}~v\in \cV_\eps,\\
		&+\infty~~&&\text{if}~v\in L^p(\omega\times (0,\eps);\RR^N)\setminus \cV_\eps
		\end{alignedat}\right.
\end{align*}
where $N\geq 3$, $\omega$ is a bounded domain in $\RR^{N-1}$, $y=(y',y_N)\in \omega\times (0,\eps)$, $g:\omega\times \RR^N\to \RR$ is a given energy density, and 
$G_\eps$ is finite only in the class of solenoidal vector fields on $\omega\times (0,\eps)$ in $L^p$ for some $1<p<\infty$, i.e.,
\begin{align*}
	\cV_\eps:=\left\{v\in L^p(\omega\times (0,\eps);\RR^3) \,\left|\, 
	\div v=0 \right.\right\}.
\end{align*}
Here and throughout the rest of this article, differential constraints as for $v$ above are
understood in the sense of distributions, in particular, $\div v=0$ for a $v\in L^p(\omega\times (0,\eps);\RR^3)$ means that $\int_{\omega\times (0,\eps)} v\cdot \nabla 
\varphi\,dy=0$ for all test functions $\varphi\in C_c^\infty(\omega\times (0,\eps))$ (smooth functions with compact support, scalar-valued).
Using rescaled variables given by $x=(x',x_N)=(y',\eps^{-1}y_N)$ and 
$u(x)=v(x',\eps x_N)$,
$G_\eps$ is transformed into a functional defined on a fixed domain:
\begin{align*}
		F_\eps(u):=\left\{\begin{array}{ll}
		\int_{\Omega} f\big(x,u(x)\big) \,dx,~~
		&\text{if}~u\in \cU_\eps,~~\text{with}~\Omega:=\omega\times (0,1),\\
		+\infty ~~&\text{if}~u\in L^p(\Omega;\RR^N)\setminus \cU_\eps,
		\end{array}\right.
\end{align*}
where $f(x,\cdot)=g(x',\cdot)$ for $x=(x',x_N)\in \RR^{N-1}\times \RR$,
$$
  \cU_\eps:=\mysetl{u\in L^p(\Omega;\RR^3)}{
	\div_\eps u=0}
$$
and
$$
  \div_\eps u:=\div' u'+\tfrac{1}{\eps}\partial_N u^N:=
	\big(\textstyle{\sum_{\alpha=1}^{N-1}}\partial_\alpha 
	u^\alpha\big)+\frac{1}{\eps}\partial_N u^N
$$
for $u=(u',u^N)=(u^1,\ldots,u^N)$. 
As this does not further complicate our approach, we allow $f$ to explicitly depend on $x_N$ as well below.
We assume that 
\begin{align*}
	\text{$f:\Omega\times \RR^N\to \RR$ is a Carathéodory function\footnotemark}\label{f0}\tag{f:0}
\end{align*}
\footnotetext{i.e.~measurable in its first and continuous in its second variable} 
satisfying the following structural conditions:
\begin{alignat*}{4}
	&\text{(growth)}~&&
	\abs{f(x,\mu)}&&\leq C\abs{\mu}^p+C, &&
	\label{f1}\tag{f:1}\\
	&\text{(coercivity)}~&&
	~f(x,\mu)&&\geq \frac{1}{C}\abs{\mu}^p-C,~~&&
	\label{f2}\tag{f:2}
\end{alignat*}
with constants $C>0$ and $1<p<\infty$, for every $\mu\in \RR^N$ and a.e.~$x\in\Omega$.

Using the notion of $\Gamma$-convergence introduced by {\sc De Giorgi} \cite{DeGioFra75a,DeGioDalMa83a},
the effective energy of in the limit $\eps\to0^+$ is expressed by the $\Gamma$-limit of $F_\eps$.
with respect to weak convergence in $L^p$. 
For an introduction to the theory of $\Gamma$-convergence, the reader is referred to \cite{Dal93B} and \cite{Brai02a}.
We use the notation
\begin{alignat*}{2}
	&\Gamma(L^p_\text{weak})-\liminf F_\eps(u)&&:=
	\inf\bigset{\liminf_{\eps\to 0^+} F_\eps(u_\eps)}
	{u_\eps \rightharpoonup u
	~\text{weakly in}~L^p},\\
	&\Gamma(L^p_\text{weak})-\limsup F_\eps(u)&&:=
	\inf\bigset{\limsup_{\eps\to 0^+} F_\eps(u_\eps)}
	{u_\eps \rightharpoonup u
	~\text{weakly in}~L^p}.
\end{alignat*}
Below, we omit the topology indicated in brackets as throughout this paper, this is always the weak topology in $L^p$. We say that $\Gamma-\lim F_\eps$ exists if $\Gamma-\liminf F_\eps$ and $\Gamma-\limsup F_\eps$ coincide, in which case this quantity is denoted by $\Gamma-\lim F_\eps$.
In particular, the use of the weak topology in $L^p$ causes a process of relaxation in the limit, roughly speaking because
energetically favorable microstructures of a characteristic size converging to zero as $\eps\to 0$ 
are allowed along the sequences generating the effective (macroscopic) limiting energy.

The corresponding problem of dimension reduction for functionals depending on gradients instead of divergence-free fields was investigated by 
\textsc{Le Dret} and \textsc{Raoult} \cite{LeDreRa95a, LeDreRa96a, LeDreRa00a} and stimulated a great deal of further research, including the study of different scalings, partially with energy densities that are realistic from the point of view of hyperelasticity (see 
\cite{FrieJaMue06a} and the references therein), as well as
extensions to non-flat limiting surfaces \cite{LePa09a,LeMaPa10a}.


Recently, dimension reduction problems for Ginzburg-Landau-type functionals, involving a magnetic potential which is divergence-free as a choice of gauge, were studied in
\cite{CoSte10a} and \cite{AlBroGa10a}. In both cases, the relevant parts of the energy density (apart from compact perturbations) are convex and thus no relaxation occurs during the limit process, avoiding the main difficulty of our problem.
Relaxation and homogenization of functionals constrained to solenoidal matrix fields were treated in \cite{Pe97B} and \cite{AnsGarr07a} (for related results and some physical background also see
\cite{GaNe04a} and \cite{PaSmy09a}),
as well as in \cite{FoMue99a}, \cite{BraiFoLe00a} and \cite{FoKroe10a} for a more general constraint of the form $\cA u=0$. In this context, $\cA$ is a linear differential operator assumed to satisfy {\sc Murat}'s condition of constant rank \cite{Mu81a}, and apart from the examples in \cite{Ta79a}, \cite{Mue99c} and \cite{LeeMueMue08a}, very little is known if this condition is violated. 
In our framework, $div_\eps$ satisfies the condition of constant rank for each $\eps$, but the associated limiting operator $\div_0$ ($\div_0 u:=\partial_N u^N$ for
$u:\Omega\to \RR^N$) does not.
From the point of view of the theory for $\cA$-free fields developed in \cite{FoMue99a,BraiFoLe00a}, this means that important bounds for the projection operator onto $\div_\eps$-free fields and its complementary projection are not uniform in $\eps$ and projecting tends to create large errors as $\eps\to 0^+$ (cf. Remark~\ref{rem:projQ}). Hence, we can (and do) use the projection only along sequences that are asymptotically $\div_\eps$-free in a very strong sense (cf.~Lemma~\ref{lem:proj}).

As we shall see, the divergence-free dimension reduction problem with nonconvex energy density exhibits some intriguing features that do not occur in the gradient case. 
In particular, it turns out that dimension reduction and direct relaxation in the limit setting do not yield the same result in general. While the former simply leads to convexification by our main theorem stated below, the latter may give rise to a nonlocal functional as illustrated by the example discussed in Proposition~\ref{prop:ex1}.

Unless indicated otherwise, we assume throughout that
$$
	N\geq 2,~~\omega\subset \RR^{N-1}~\text{is open and bounded},~~\Omega:=\omega\times (0,1)~~\text{and}~~1<p<\infty.
$$
\begin{thm}\label{thm:main}
Suppose that \eqref{f0}--\eqref{f2} are satisfied.
Then $\Gamma-\lim_{\eps\to 0^+} F_\eps$ (with respect to weak convergence in $L^p$) exists, and  it has the representation 
\begin{align*}
	\Gamma-\lim F_\eps(u)=
	\left\{\begin{array}{ll}
	  F^{**}(u):=\int_\Omega f^{**}(x,u)\,dx &\text{if $u\in \cU_0$},\\
	  +\infty &\text{if $u\in L^p(\Omega;\RR^N)\setminus \cU_0$},	
	\end{array}\right.
\end{align*}
where for each $x$, $f^{**}(x,\cdot)$ denotes the convex envelope of $f(x,\cdot)$ and
\begin{align*}
  \cU_0:=\mysetl{u\in L^p(\Omega;\RR^N)}{
	\partial_N u^N=0~\text{in $\Omega$}}.
\end{align*}
\end{thm}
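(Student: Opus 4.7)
The plan is to establish the $\Gamma$-$\liminf$ and $\Gamma$-$\limsup$ inequalities separately. For the lower bound, I split on whether $u\in \cU_0$. Given a sequence $u_\eps\rightharpoonup u$ weakly in $L^p$ with $\liminf F_\eps(u_\eps)<\infty$, coercivity \eqref{f2} extracts an $L^p$-bounded subsequence inside $\cU_\eps$; rewriting the constraint as $\partial_N u_\eps^N=-\eps\,\div' u_\eps'$ and using that $\div' u_\eps'$ stays bounded in $W^{-1,p}$, the right-hand side vanishes in $W^{-1,p}$ as $\eps\to 0^+$, and the weak limit forces $\partial_N u^N=0$, i.e.~$u\in \cU_0$. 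Thus the $\Gamma$-$\liminf$ equals $+\infty$ whenever $u\notin \cU_0$. For $u\in \cU_0$, the pointwise bound $f\ge f^{**}$ combined with the weak-$L^p$ lower semicontinuity of the convex integrand $\int_\Omega f^{**}(x,\cdot)\,dx$ gives $\liminf F_\eps(u_\eps)\ge F^{**}(u)$.

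For the upper bound (recovery sequence), given $u\in \cU_0$, I would proceed in four steps. \emph{(a) Smoothing:} Since $\partial_N u^N=0$ means $u^N$ depends only on $x'$ while $u'$ is unconstrained, one approximates $u$ strongly in $L^p$ by smooth elements of $\cU_0$ (mollify $u'$ in all variables and $u^N$ in $x'$ only, after extension), and recovers the general $u$ via a final diagonalization. \emph{(b) Divergence correction:} For smooth $u\in \cU_0$, set $u_\eps^{(0)}:=u+\eps\, h_\eps\, e_N$ with $h_\eps(x',x_N):=-\int_0^{x_N}\div' u'(x',t)\,dt$. A direct calculation gives $\div_\eps u_\eps^{(0)}=0$ and $\|u_\eps^{(0)}-u\|_\infty=O(\eps)$, so $u_\eps^{(0)}\in \cU_\eps$ converges strongly to $u$ in $L^p$. \emph{(c) Convexification at fixed $\eps$:} The operator $\div_\eps$ is a linear differential operator of constant rank one, and its wave cone is all of $\RR^N$ (for $N\ge 2$, any $\mu\in\RR^N$ is orthogonal to $(\xi',\xi_N/\eps)$ for some $\xi\ne 0$), so its $\cA$-quasiconvex envelope coincides with $f^{**}$; the $\cA$-free relaxation result of \cite{FoMue99a,BraiFoLe00a} applied with $\cA=\div_\eps$ produces $v_{\eps,k}\in \cU_\eps$ such that $v_{\eps,k}\rightharpoonup u_\eps^{(0)}$ weakly in $L^p$ and $\int_\Omega f(x,v_{\eps,k})\,dx\to\int_\Omega f^{**}(x,u_\eps^{(0)})\,dx$ as $k\to\infty$. \emph{(d) Diagonalization:} The $p$-growth \eqref{f1} and the strong convergence $u_\eps^{(0)}\to u$ yield $\int_\Omega f^{**}(x,u_\eps^{(0)})\,dx\to F^{**}(u)$; coercivity and the finite target energy provide uniform $L^p$-bounds, so weak $L^p$ convergence is metrizable on the relevant ball, and a standard extraction selects $k(\eps)$ such that $u_\eps:=v_{\eps,k(\eps)}\rightharpoonup u$ weakly in $L^p$ and $F_\eps(u_\eps)\to F^{**}(u)$.

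The step I expect to be most delicate is~(c), which is where the thin-domain feature is hidden. Although $\div_\eps$ has constant rank for each fixed $\eps>0$, the limit operator $u\mapsto\partial_N u^N$ does not, and the projection onto $\cU_\eps$ is therefore not bounded uniformly as $\eps\to 0^+$, as already flagged in the introduction. This is precisely why I convexify at fixed $\eps$ first and pass to $\eps\to 0^+$ only afterwards, rather than trying to build a single oscillating family uniform in $\eps$. Concretely, the recovery in step~(c) can be realized via Carath\'eodory ($f^{**}(x,u_\eps^{(0)})=\sum_i\lambda_i(x)f(x,\mu_i(x))$) and laminations in a direction $e=(e',e_N)$ chosen so that $(e',e_N/\eps)\cdot(\mu_i(x)-\mu_j(x))=0$ (always achievable with $e_N=O(\eps)$); keeping this sequence $\div_\eps$-free to the high accuracy required by the projection lemma alluded to in the introduction is the principal technical hurdle in making the construction work cleanly.
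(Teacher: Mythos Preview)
Your lower bound argument is essentially identical to the paper's (Lemma~\ref{lem:constinx3} and Proposition~\ref{prop:lbconvex}).

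For the upper bound, your route is \emph{genuinely different} from the paper's and, as far as I can see, correct. The paper never invokes the $\cA$-free relaxation theorem as a black box; instead it builds the recovery sequence by hand: Carath\'eodory's theorem is used to write $f^{**}(x,u(x))$ as a convex combination of values of $f$, Lemma~\ref{lem:convexa} splits this combination into pairs lying on a common level set $\{\mu^N=u^N(x)\}$, and Proposition~\ref{prop:ub1} produces, for each such pair, an explicit lamination in a direction orthogonal to $\zeta_1-\zeta_2$ (with frequency $\sim\eps_n^{-2}$ in $x'$ and $\sim\eps_n^{-1}$ in $x_N$) that is $\div_{\eps_n}$-free up to an error which can be made smaller than any prescribed $\sigma_n$ in $W^{-1,p}$; this error is then removed by the projection Lemma~\ref{lem:proj}. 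The whole construction is designed so that the $1/\eps$ in $\div_\eps$ never has to be controlled uniformly in $\eps$. Your argument sidesteps all of this by observing that for \emph{fixed} $\eps$ the operator $\div_\eps$ has constant rank with wave cone $\RR^N$, so the $\cA$-free relaxation of \cite{FoMue99a,BraiFoLe00a} already gives $\int f^{**}$ together with a recovery sequence in $\cU_\eps$; the passage $\eps\to 0$ is then a pure diagonalization in which only weak $L^p$-convergence (metrizable on bounded sets) and convergence of scalar energies are needed, and no projection with $\eps$-dependent bounds is ever invoked. What you gain is brevity and conceptual clarity; what the paper's explicit construction buys is independence from the heavy relaxation machinery and, more importantly, a concrete picture of the oscillations that makes the contrast with the nonlocal relaxation $\tilde F_0$ of Proposition~\ref{prop:ex1} transparent and that may be needed for the matrix-valued extensions alluded to in Remark~\ref{rem:matrixcase} (where the wave cone is no longer all of $\RR^{N\times M}$ and your step~(c) would not yield the convex envelope).
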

It is fairly easy to see that both $\Gamma-\limsup F_\eps(u)$ and $\Gamma-\liminf F_\eps(u)$ are finite if and only if $u\in \cU_0$ 
(Lemma~\ref{lem:constinx3} and Lemma~\ref{lem:rc1}), and
the lower bound for $\Gamma-\liminf F_\eps(u)$ is of course a simple consequence of the weak lower semicontinuity of convex functionals (Proposition~\ref{prop:lbconvex}). However, the upper bound, $\Gamma-\limsup F_\eps(u)\leq F^{**}(u)$ for $u\in\cU_0$, is far more difficult than in the gradient case. The main issue here is that \emph{a priori}, we do not know whether or not $\Gamma-\limsup F_\eps$ is a local integral functional. The usual trick for a proof of this property, based on ``localizing'' a sequence $u_\eps$ that weakly converges to zero by multiplying it with suitable smooth cut-off functions with the desired support, does not work in our setting, at least not in direction of the last variable, because the distance of the modified sequence to the set of $\div_\eps$-free fields in $L^p$ may be of an order approaching $1/\eps$ which is an error too large to handle. Indeed, our proof of the upper bound in Section~\ref{sec:ub} (culminating in Proposition~\ref{prop:ub}) does not use this kind of truncation in direction $x_N$, instead relying on a rather explicit construction of 
suitable sequences with small support in direction of $x_N$ which are asymptotically $\div_\eps$-free 
in the sense that their distance to $\cU_\eps$ with respect to the norm of $L^p$ goes to zero as $\eps\to 0^+$ (by Lemma~\ref{lem:proj}).
A prototype of this construction for a simple example is presented in Proposition~\ref{prop:ex2}.
\section{Preliminary observations}

We first observe that both $\Gamma-\limsup F_\eps(u)$ and $\Gamma-\liminf F_\eps(u)$ are finite if and only if $u\in \cU_0$.
The following simple density result turns out to be useful.
\begin{lem}\label{lem:U0density}
With respect to the strong topology in $L^p(\Omega;\RR^N)$, $\cU_0\cap C^\infty(\bar\Omega;\RR^N)$ is dense in $\cU_0$.
\end{lem}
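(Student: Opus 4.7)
My plan is to reduce the density problem to two independent standard approximations by exploiting that the constraint $\partial_N u^N=0$ decouples $u^N$ from the other components and forces $u^N$ to be a function of $x'$ only. Given $u=(u',u^N)\in\cU_0$, the constraint is a condition on $u^N$ alone, while $u'=(u^1,\ldots,u^{N-1})$ is completely unconstrained. So after a suitable representation of $u^N$, it suffices to smooth $u'$ and $u^N$ separately using classical mollification.

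The first (and, I expect, only nontrivial) step is to show that $u^N$ admits a representative of the form $u^N(x',x_N)=w(x')$ for some $w\in L^p(\omega)$. For this I test the distributional identity $\partial_N u^N=0$ against product functions $\varphi(x',x_N)=\psi(x')\chi(x_N)$ with $\psi\in C_c^\infty(\omega)$, $\chi\in C_c^\infty(0,1)$, and apply Fubini to conclude that for a.e.\ $x'\in\omega$ the slice $x_N\mapsto u^N(x',x_N)$ has vanishing distributional derivative on $(0,1)$; using a countable dense family of test functions $\chi$ one obtains a single null set outside of which this holds, hence the slice is almost everywhere equal to a constant $w(x')$. The resulting function $w$ belongs to $L^p(\omega)$ by Fubini applied to $\abs{u^N}^p$.

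Next I pick, by standard $L^p$-density of smooth compactly supported functions on an arbitrary open set, sequences
\begin{equation*}
  u'_k\in C_c^\infty(\Omega;\RR^{N-1})\quad\text{with}\quad u'_k\to u'\text{ in }L^p(\Omega;\RR^{N-1}),
\end{equation*}
and
\begin{equation*}
  w_k\in C_c^\infty(\omega)\quad\text{with}\quad w_k\to w\text{ in }L^p(\omega),
\end{equation*}
and set $u_k(x',x_N):=(u'_k(x',x_N),\,w_k(x'))$. Then $u_k\in C^\infty(\bar\Omega;\RR^N)$ (each component extends smoothly by zero to $\bar\Omega$), and $\partial_N u_k^N=\partial_N w_k(x')=0$, so $u_k\in\cU_0\cap C^\infty(\bar\Omega;\RR^N)$. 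The convergence
\begin{equation*}
  \norm{u_k-u}_{L^p(\Omega;\RR^N)}\leq \norm{u'_k-u'}_{L^p(\Omega;\RR^{N-1})}+\norm{w_k-w}_{L^p(\omega)}\to 0
\end{equation*}
is immediate, and this establishes the claim. The only real technical content is the Fubini-type argument in the first step; everything else is routine once $u^N$ has been written as a function of $x'$ alone.
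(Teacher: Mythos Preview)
Your proof is correct and follows essentially the same route as the paper. The paper extends $u$ to $\RR^N$ (with $u^N$ extended constant in $x_N$) and mollifies, while you first make the representation $u^N(x',x_N)=w(x')$ explicit via the Fubini argument and then approximate $u'$ and $w$ separately by smooth compactly supported functions; both arguments hinge on the same observation that the constraint forces $u^N$ to depend only on $x'$, so that a standard mollification in all variables (or, equivalently, a direct $C_c^\infty$ approximation of $w$ in $L^p(\omega)$) preserves membership in $\cU_0$.
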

\begin{proof}
Let $u\in\cU_0$, and extend $u=(u^1,\ldots,u^N)$ to a function in $L^p_\loc(\Omega;\RR^N)$
such that $u^j=0$ on $\RR^N\setminus \Omega$ for $j=1,\ldots,N-1$, $u^N=0$ on $\RR^N\setminus (\omega\times \RR)$ 
and $u^N(x',x_N)$ is still constant in $x_N$ for a.e.~$x'\in\omega$. 
Mollifying in the usual way yields a sequence $(u_k)_{k\in\NN}$ in $C^\infty(\RR^N;\RR^N)\cap \cU_0$ with $u_k\to u$ strongly in $L^p(\Omega;\RR^N)$.
\end{proof}

\begin{lem}\label{lem:constinx3}
Let $v_n$ be a bounded sequence in $L^p(\Omega)$ with 
$v_n \rightharpoonup v_\infty$ weakly in $L^p(\Omega)$, and suppose that
$\partial_N v_n\to 0$ in the sense of distributions.
Then $v_\infty$ is constant in $x_N$.
In particular, if $(u_\eps)\subset \cU_\eps$ and $u_\eps\rightharpoonup u$ weakly in $L^p$, then 
$u\in \cU_0$. 
\end{lem}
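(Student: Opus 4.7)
The plan is a direct distributional computation. For the main statement, fix $\varphi\in C_c^\infty(\Omega)$. By definition of distributional derivative,
\[
  \int_\Omega v_n\,\partial_N\varphi\,dx = -\langle \partial_N v_n,\varphi\rangle \longrightarrow 0
\]
by hypothesis, while weak $L^p$ convergence gives
\[
  \int_\Omega v_n\,\partial_N\varphi\,dx \longrightarrow \int_\Omega v_\infty\,\partial_N\varphi\,dx.
\]
Hence $\partial_N v_\infty = 0$ in $\cD'(\Omega)$.

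To pass from this to pointwise $x_N$-constancy, I would exploit the product structure $\Omega = \omega\times(0,1)$. Testing $\partial_N v_\infty = 0$ against products $\psi(x')\chi(x_N)$ with $\psi \in C_c^\infty(\omega)$, $\chi \in C_c^\infty(0,1)$ and applying Fubini yields
\[
  \int_\omega \psi(x')\Bigl(\int_0^1 v_\infty(x',x_N)\,\chi'(x_N)\,dx_N\Bigr)dx' = 0.
\]
Varying $\psi$ and choosing a countable $\{\chi_k\}$ dense in $C_c^\infty(0,1)$ in the $W^{1,p'}$-norm, the inner bracket vanishes for a.e.\ $x' \in \omega$ and all $k$ simultaneously; density together with $L^p$--$L^{p'}$ duality on the slice $v_\infty(x',\cdot) \in L^p(0,1)$ then extends this to every $\chi$. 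Thus for a.e.\ $x'$ the slice has zero distributional derivative on the connected interval $(0,1)$, hence is a.e.\ equal to a constant, which is the assertion.

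For the supplement, apply the first part to the scalar $v_n := u_\eps^N$. The constraint $\div_\eps u_\eps = 0$ reads $\partial_N u_\eps^N = -\eps\sum_{\alpha=1}^{N-1}\partial_\alpha u_\eps^\alpha$ in $\cD'(\Omega)$, so for any $\varphi \in C_c^\infty(\Omega)$
\[
  |\langle \partial_N u_\eps^N, \varphi\rangle| \leq \eps\,\|u_\eps\|_{L^p(\Omega)}\,\|\nabla'\varphi\|_{L^{p'}(\Omega)} \longrightarrow 0,
\]
since weak $L^p$ convergence forces uniform boundedness of $\|u_\eps\|_{L^p}$. The first part then yields $\partial_N u^N = 0$, i.e., $u \in \cU_0$. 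Everything is routine; the only step meriting care is the Fubini-slicing argument in the middle paragraph, which handles the standard subtlety that the exceptional null set a priori depends on the test function.
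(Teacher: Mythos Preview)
Your proof is correct and follows essentially the same route as the paper: test against products $\psi(x')\chi(x_N)$, apply Fubini, and conclude slice-wise constancy. You are in fact slightly more thorough than the paper, which asserts ``for a.e.~$x'$ and every $\eta$'' without spelling out the countable-density step you supply, and which leaves the ``in particular'' clause implicit whereas you verify $\partial_N u_\eps^N=-\eps\,\div' u_\eps'\to 0$ explicitly.
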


\begin{proof}
For every $\varphi\in C_0^\infty(\omega)$ and every $\eta\in C_0^\infty((0,1))$, we have
\begin{align*}
	0&= \lim_{n\to\infty} \int_\omega\int_{(0,1)} v_n(x',x_N) 
	\varphi(x')\dot\eta(x_N) \,dx_N dx'\\
	&= \int_\omega\int_{(0,1)} v_\infty(x',x_N) 
	\varphi(x')\dot\eta(x_N) \,dx_N dx'.
\end{align*}
In particular, since $\varphi$ was arbitrary, we get
\begin{align*}
	\int_{(0,1)} v_\infty(x',x_N) \dot\eta(x_N) \,dx_N=0
	~~\text{for a.e.~$x'\in \omega$ and every $\eta\in C_0^\infty((0,1))$},
\end{align*}
which in turn implies that 
$v_\infty(x',x_N)$ is constant in $x_N$.
\end{proof}
\begin{lem}~\label{lem:rc1}
For every $u\in \cU_0$, there exists a sequence $(u_\eps)\subset \cU_\eps$ such that $u_\eps-u\to 0$ in $L^p(\Omega;\RR^N)$.
\end{lem}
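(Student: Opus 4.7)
The strategy is first to handle the smooth case by an explicit $O(\eps)$ correction in the last component, and then reduce the general case by the density result of Lemma~\ref{lem:U0density} combined with a standard diagonal argument.

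By Lemma~\ref{lem:U0density}, fix a sequence $(u^{(k)})_{k\in\NN}\subset \cU_0\cap C^\infty(\bar\Omega;\RR^N)$ with $u^{(k)}\to u$ strongly in $L^p(\Omega;\RR^N)$. Suppose for a moment that for every smooth $v\in \cU_0\cap C^\infty(\bar\Omega;\RR^N)$ we can produce $(v_\eps)\subset \cU_\eps$ with $v_\eps\to v$ in $L^p$ as $\eps\to 0^+$. Applying this to each $u^{(k)}$ and letting $k(\eps)\to\infty$ slowly enough yields $u_\eps:=u^{(k(\eps))}_\eps\in \cU_\eps$ with $u_\eps\to u$ in $L^p$. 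It therefore suffices to treat a fixed smooth $v\in \cU_0\cap C^\infty(\bar\Omega;\RR^N)$.

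For such $v$, the assumption $\partial_N v^N=0$ forces $v^N$ to depend only on $x'$. Define a correction $\psi:\Omega\to\RR^N$ by $\psi^\alpha:=0$ for $\alpha=1,\ldots,N-1$ and
$$
  \psi^N(x',x_N):=-\int_0^{x_N}\div' v'(x',s)\,ds.
$$
Since $v$ is smooth on the closure of the bounded domain $\Omega$, we have $\psi\in L^\infty(\Omega;\RR^N)$ with a bound independent of $\eps$. Setting $v_\eps:=v+\eps\psi$, so that $(v_\eps)'=v'$ and $(v_\eps)^N=v^N+\eps\psi^N$, a direct computation gives
$$
  \div_\eps v_\eps=\div' v'+\tfrac{1}{\eps}\partial_N v^N+\partial_N\psi^N=\div' v'+0-\div' v'=0,
$$
so $v_\eps\in \cU_\eps$, while $\nnorm{v_\eps-v}_{L^p(\Omega;\RR^N)}\leq \eps\nnorm{\psi}_{L^\infty}|\Omega|^{1/p}\to 0$ as $\eps\to 0^+$.

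The argument is elementary; the substantive point is purely a matter of scaling. The prefactor $1/\eps$ in front of $\partial_N$ in $\div_\eps$ is precisely what allows a perturbation of size $\eps$ in the $N$-th component alone to cancel the tangential divergence $\div' v'$ in full. No genuine obstacle is expected here, though the same trade-off between variation in $x_N$ and a small correction in $u^N$ reappears, in a considerably more delicate form, when the recovery sequences of the upper bound are constructed later in the paper.
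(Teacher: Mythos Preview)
Your proof is correct and essentially identical to the paper's: both first treat smooth $v\in\cU_0$ by the explicit $O(\eps)$ correction $v_\eps^N=v^N-\eps\int_0^{x_N}\div' v'\,ds$ in the last component, then pass to general $u\in\cU_0$ via Lemma~\ref{lem:U0density} and a diagonal argument. The only cosmetic difference is that the paper makes the diagonalization condition explicit, requiring $\eps\,\nnorm{u_{k(\eps)}}_{C^1}\to 0$, while you invoke the standard ``$k(\eps)\to\infty$ slowly enough'' formulation.
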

\begin{rem}
Using Lebesgue's theorem, \eqref{f0} and \eqref{f1}, we get that $\lim F_\eps(u_\eps)= \int_\Omega f(x,u)\,dx<\infty$, and thus
$\Gamma-\limsup F_\eps(u)<\infty$ for every $u\in\cU_0$.
\end{rem}
\begin{proof}[Proof of Lemma~\ref{lem:rc1}]
{\bf \underline{Step 1}:} Assume in addition that $u\in C^1(\overline{\Omega};\RR^N)$.\\
For $j=1,\ldots,N-1$ define $u_\eps^j:=u^j$, and let
\begin{align*}
	u_\eps^N(x',x_N):=u^N(x',x_N)-\eps \int_0^{x_N} \div'u(x',t)\,dt,
\end{align*}
where $\div' u=\partial_1 u^1+\ldots+\partial_{N-1} u^{N-1}$.
We thus have that $\div_\eps u_\eps=0$ and $u_\eps\to u$ strongly in $L^p$, whence $v_\eps:=u_\eps$ has the asserted properties.

{\bf \underline{Step 2}:} The general case.\\
By Lemma~\ref{lem:U0density}, there exists a sequence $(u_k)\subset C^1(\overline{\Omega};\RR^N)\cap \cU_0$ with $u_k\to u$ strongly in $L^p(\Omega;\RR^N)$ as $k\to\infty$. For each $k$ and each $\eps$, we define $u_{k,\eps}\in \cU_\eps$ as in the first step, using $u_k$ instead of $u$. Now choose $(k(\eps))_{\eps>0}$ with $k(\eps)\to \infty$ slow enough such that 
$\eps \norm{u_{k(\eps)}}_{C^1(\overline{\Omega};\RR^N)}\to 0$ as $\eps\to 0$. 
As a consequence, $u_\eps:=u_{k(\eps),\eps}$ converges to $u$ strongly in $L^p$,
and it satisfies $\div_\eps u_\eps=0$ by construction.
\end{proof}
To prove the lower bound $\Gamma-\liminf F_\eps(u)\geq F^{**}(u)$ for $u\in\cU_0$,
we first recall the well known characterization of weak lower semicontinuity of convex functionals:
\begin{thm}[see \cite{Giu03B} or \cite{FoLe07B}, e.g.]\label{thm:wlsc}
Suppose that $f$ satisfies \eqref{f0}.
Then the functional $J:L^p(\Omega,\RR^N)\to [0,\infty]$, 
$J(u):=\int_\Omega f(x,u)\,dx$, is lower semicontinuous with respect to weak convergence in $L^p$
if and only if $f(x,\cdot)$ is convex for a.e.~$x\in\Omega$.
\end{thm}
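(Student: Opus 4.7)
The proof splits into the two directions of the equivalence.

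For the \emph{sufficiency} (convexity $\Rightarrow$ weak lower semicontinuity), the key step is to represent $f$ as a countable supremum of affine functions with measurable coefficients: since $f(x,\cdot)$ is convex and continuous, there exist measurable $a_n:\Omega\to\RR^N$ and $b_n:\Omega\to\RR$ such that $f(x,\mu)=\sup_n\bigl(a_n(x)\cdot\mu+b_n(x)\bigr)$ for a.e.\ $x$ and every $\mu$. Then $J(u)=\sup_n\int_\Omega(a_n\cdot u+b_n)\,dx$ is the supremum of a family of affine continuous functionals on $L^p$, hence weakly lower semicontinuous. Integrability issues for the affine minorants are handled by truncation; alternatively, one may argue directly via Mazur's lemma, passing from $u_k\rightharpoonup u$ to convex combinations $\tilde u_k\to u$ strongly, then to an a.e.\ convergent subsequence, and combining Fatou's lemma with pointwise convexity.

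For the \emph{necessity} (weak lower semicontinuity $\Rightarrow$ convexity a.e.), I would test $J$ against oscillating sequences that interpolate between two constant values. Fix $\mu_1,\mu_2\in\RR^N$, $\lambda\in(0,1)$, and any measurable $A\subset\Omega$, and construct measurable $E_n\subset A$ with $\chi_{E_n}\rightharpoonup \lambda$ weakly-$*$ in $L^\infty(A)$, for example $E_n=\{x\in A:\{n x_1\}<\lambda\}$ with $\{\cdot\}$ the fractional part. Set $u_n:=\mu_1\chi_{E_n}+\mu_2\chi_{A\setminus E_n}$ on $A$ and $u_n:=\lambda\mu_1+(1-\lambda)\mu_2$ on $\Omega\setminus A$; then $u_n\rightharpoonup\lambda\mu_1+(1-\lambda)\mu_2$ in $L^p(\Omega;\RR^N)$. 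The weak lower semicontinuity assumption yields the averaged inequality
\begin{equation*}
\int_A f\bigl(x,\lambda\mu_1+(1-\lambda)\mu_2\bigr)\,dx\leq \lambda\int_A f(x,\mu_1)\,dx+(1-\lambda)\int_A f(x,\mu_2)\,dx.
\end{equation*}
Specializing $A$ to small balls $B_r(x_0)\subset\Omega$, dividing by $|B_r(x_0)|$, and applying Lebesgue's differentiation theorem then gives the pointwise convexity inequality at a.e.\ $x$, first for $\mu_1,\mu_2$ in a countable dense subset of $\RR^N$ and $\lambda$ in a countable dense subset of $(0,1)$, then for all parameters by continuity of $f(x,\cdot)$.

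The main technical obstacle lies on the necessity side. Two points require care: verifying that $\chi_{E_n}\rightharpoonup\lambda$ in the required sense, which reduces via density of simple functions in $L^{p'}$ to a one-dimensional Riemann--Lebesgue-type computation exploiting the periodicity of the construction; and upgrading the averaged inequality to a pointwise one uniformly in a dense countable family of parameters, which is standard but requires working at common Lebesgue points of countably many integrands simultaneously. The sufficiency direction, by contrast, is purely structural once the affine-supremum representation (or Mazur's lemma) is in hand.
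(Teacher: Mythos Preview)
The paper does not give its own proof of this theorem: it is quoted as a known result with references to Giusti and Fonseca--Leoni, so there is nothing to compare against. Your sketch follows the standard lines found in those references and is essentially correct.

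One caution on your first argument for sufficiency: the identity $J(u)=\sup_n\int_\Omega(a_n(x)\cdot u+b_n(x))\,dx$ is not true as written, since the pointwise supremum inside the integral generally strictly dominates the supremum of the integrals. To make this route work you must take the supremum over measurable selections $x\mapsto n(x)$ (equivalently, over finite measurable partitions of $\Omega$ with a choice of index on each piece), and then verify that each such integral is weakly continuous, which in turn needs the affine minorants to have coefficients in $L^{p'}\times L^1$; this is where the truncation you allude to actually enters. Your alternative via Mazur's lemma combined with Fatou (legitimate here because $f\geq 0$) is the cleaner path and avoids these bookkeeping issues entirely. The necessity argument via periodic oscillations and Lebesgue differentiation is correct and standard.
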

As an immediate consequence, we have
\begin{prop}[lower bound]\label{prop:lbconvex}
Suppose that the assumptions of Theorem~\ref{thm:main} hold. Then for every $u\in \cU_0$,
\begin{align*}
	\Gamma-\liminf F_\eps(u)\geq F^{**}(u).
\end{align*}
\end{prop}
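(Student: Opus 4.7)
The strategy is direct: if the left-hand side is $+\infty$ the claim is trivial, so one may restrict attention to a recovery sequence with $u_\eps\in\cU_\eps$ and $u_\eps\rightharpoonup u$ weakly in $L^p$. Lemma~\ref{lem:constinx3} guarantees that the limit $u$ indeed lies in $\cU_0$, which is consistent with the hypothesis. The plan is then to pass to the convex envelope pointwise, and to invoke Theorem~\ref{thm:wlsc} to obtain weak lower semicontinuity of the convexified integral, which is the only nontrivial ingredient.

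Concretely, I would proceed as follows. First, fix an arbitrary sequence $u_\eps\rightharpoonup u$ weakly in $L^p(\Omega;\RR^N)$ and assume, without loss of generality, that $\liminf_\eps F_\eps(u_\eps)<\infty$, so that (after extracting a subsequence realizing the liminf) one has $u_\eps\in\cU_\eps$ and the energies $\int_\Omega f(x,u_\eps)\,dx$ are uniformly bounded. Second, observe the pointwise inequality $f(x,\mu)\geq f^{**}(x,\mu)$ for a.e.\ $x$ and every $\mu\in\RR^N$, which immediately gives
\begin{align*}
F_\eps(u_\eps)=\int_\Omega f(x,u_\eps)\,dx \geq \int_\Omega f^{**}(x,u_\eps)\,dx.
\end{align*}

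Third, I would verify that the convexified density $f^{**}$ satisfies the hypothesis of Theorem~\ref{thm:wlsc}: it is measurable in $x$ (standard, since $f$ is Carath\'eodory and the convex envelope in $\mu$ can be written as an infimum of affine functions with measurable coefficients) and continuous in $\mu$ (being a finite convex function on $\RR^N$, thanks to the growth bound \eqref{f1} which forces $f^{**}(x,\mu)\leq C|\mu|^p+C$). Moreover \eqref{f2} passes to the envelope since $\frac{1}{C}|\mu|^p-C$ is already convex, giving $f^{**}(x,\mu)\geq\frac{1}{C}|\mu|^p-C$; replacing $f^{**}$ by $f^{**}+C$ one obtains a nonnegative Carath\'eodory integrand with convex second argument, to which Theorem~\ref{thm:wlsc} applies. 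Hence the functional $u\mapsto\int_\Omega f^{**}(x,u)\,dx$ is lower semicontinuous with respect to weak convergence in $L^p$, so that
\begin{align*}
\liminf_{\eps\to 0^+}\int_\Omega f^{**}(x,u_\eps)\,dx \geq \int_\Omega f^{**}(x,u)\,dx = F^{**}(u).
\end{align*}
Taking the infimum over all admissible sequences yields $\Gamma\text{-}\liminf F_\eps(u)\geq F^{**}(u)$.

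There is no real obstacle here: the only technical point is the measurability of $f^{**}(\cdot,\mu)$ and the need to work with a nonnegative shift of $f^{**}$ so that the cited weak lower semicontinuity theorem is directly applicable. Everything else is a one-line consequence of the definition of the convex envelope and of $\Gamma$-liminf.
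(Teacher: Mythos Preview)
Your argument is correct and coincides with the paper's approach: the paper states this proposition as an immediate consequence of Theorem~\ref{thm:wlsc}, and your proof simply spells out that consequence (pointwise bound $f\geq f^{**}$ followed by weak lower semicontinuity of the convexified integral). The additional care you take in checking that $f^{**}$ is Carath\'eodory and bounded below is appropriate but routine.
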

For the upper bound, we have to construct a suitable sequence $(u_\eps)\subset \cU_\eps$ such that
$u_\eps\rightharpoonup u$ in $L^p$ and $F_\eps(u_\eps)\to F^{**}(u)$, starting from a given $u\in\cU_0$. 
The main problem here is the constraint $\div_\eps u_\eps=0$.
In particular, we rely on a projection onto $\div_\eps$-free fields, which is based on the following special case of the projection used in \cite{FoMue99a}.
\begin{lem}\label{lem:projQ}
Let $1<p<\infty$ and let $Q\subset \RR^N$ be an open cube. For every $\eps>0$, there exists a linear operator $\cP_\eps:L^p(Q;\RR^N)\to L^p(Q;\RR^N)$ with the following properties:
\renewcommand{\labelenumi}{(\roman{enumi})}%
\begin{myenum}
\item $\div_\eps \cP_\eps u=0$ on $\RR^N$ for every $u\in L^p(Q;\RR^N)$, where $\cP_\eps u$ is extended $Q$-periodically.
\item $\cP_\eps w=w$ for every $w\in L^p(Q;\RR^N)$ such that $\div_\eps w=0$ on $\RR^N$, where $w$ is identified with its $Q$-periodic extension to $\RR^N$.
\item $\norm{\cP_\eps u}_{L^p(Q;\RR^N)}\leq C_\eps \norm{u}_{L^p(Q;\RR^N)}$ for every $u\in L^p(Q;\RR^N)$, with a constant $C_\eps>0$ independent of $u$.
\item 
$\norm{(I-\cP_\eps)u}_{L^p(Q;\RR^N)}\leq C_\eps \nnorm{\div_\eps u}_{W^{-1,p}(Q)}$
for every $u\in L^p(Q;\RR^N)$, with a constant $C_\eps>0$ independent of $u$.
\end{myenum}
Here, on a given domain $W^{-1,p}$ denotes the dual space of $W_0^{1,p'}$ with $p'=p/(p-1)$.
\end{lem}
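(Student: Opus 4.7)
The natural approach is to construct $\cP_\eps$ as a Fourier multiplier on the periodic cube. Taking $Q = (0,L)^N$ without loss of generality and writing $u = \sum_{k \in \ZZ^N} \hat u_k e^{2\pi i k \cdot x / L}$, set $k_\eps := (k_1, \ldots, k_{N-1}, \eps^{-1} k_N)$. The distributional identity $\div_\eps u = 0$ on $\RR^N$ for the $Q$-periodic extension of $u$ is equivalent to $k_\eps \cdot \hat u_k = 0$ for every $k \in \ZZ^N \setminus \{0\}$, so the plan is to set
$$
\widehat{\cP_\eps u}_0 := \hat u_0, \qquad \widehat{\cP_\eps u}_k := \Big(I - \tfrac{k_\eps \otimes k_\eps}{|k_\eps|^2}\Big) \hat u_k \quad \text{for } k \neq 0.
$$
Each non-zero Fourier mode of $\cP_\eps u$ is then the orthogonal projection of $\hat u_k$ onto $k_\eps^\perp$, so properties (i) and (ii) are built into the definition.

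For (iii), the cleanest route is the anisotropic rescaling $y := (x', \eps x_N)$, $v(y) := u(x', \eps^{-1} y_N)$, which identifies $Q$-periodic $L^p$-fields on $Q$ with $Q_\eps$-periodic $L^p$-fields on $Q_\eps := (0,L)^{N-1} \times (0, \eps L)$ and transforms $\div_\eps u$ into the ordinary $\div v$. Under this identification $\cP_\eps$ becomes the classical Helmholtz/Leray projector on $L^p(Q_\eps;\RR^N)$, whose boundedness for $1 < p < \infty$ is standard (Mihlin's multiplier theorem, or equivalently the boundedness of Riesz transforms on the torus); the change-of-variables Jacobian then produces the $\eps$-dependent factor $C_\eps$. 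For (iv), the same Fourier formula yields
$$
(I - \cP_\eps) u = \nabla_\eps w, \qquad \nabla_\eps := (\partial_1, \ldots, \partial_{N-1}, \eps^{-1} \partial_N),
$$
where $w$ is the mean-zero, $Q$-periodic solution of $\Delta_\eps w = \div_\eps u$ with $\Delta_\eps := \sum_{\alpha=1}^{N-1} \partial_\alpha^2 + \eps^{-2} \partial_N^2$. Since $\Delta_\eps$ is uniformly elliptic for each fixed $\eps > 0$, the classical $L^p$-theory (again transportable via the rescaling to the standard Laplacian on $Q_\eps$) yields $\norm{\nabla w}_{L^p(Q)} \leq C_\eps \nnorm{\div_\eps u}_{W^{-1,p}(Q)}$, which is (iv).

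No individual step is conceptually hard; the content is in the careful bookkeeping of the scaling. The essential structural point, already flagged in Remark~\ref{rem:projQ} and central to the subsequent strategy for the upper bound, is that the constants $C_\eps$ necessarily blow up as $\eps \to 0^+$: the rescaled cube $Q_\eps$ degenerates in the $y_N$-direction, or equivalently the symbol $k_\eps$ becomes singular along $\{k_N = 0\}$, reflecting the loss of Murat's constant-rank property by the limiting operator $\div_0 = \partial_N$. This is precisely why $\cP_\eps$ can only be applied later along sequences for which the $W^{-1,p}$-norm of $\div_\eps u$ vanishes strictly faster than $C_\eps$ blows up.
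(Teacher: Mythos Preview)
Your construction via Fourier multipliers on the periodic cube is correct and is precisely the content of Lemma~2.14 in Fonseca--M\"uller \cite{FoMue99a}, specialized to $\cA=\div_\eps$; the paper's own proof simply observes that the symbol $\xi\mapsto(\xi',\eps^{-1}\xi_N)$ has full rank for every $\xi\neq 0$ (so Murat's constant-rank condition holds for each fixed $\eps$) and then invokes that lemma directly. Your explicit description and the anisotropic rescaling to the standard Leray projector on $Q_\eps$ unpack exactly what lies behind the cited result, with the benefit of making the mechanism and the $\eps$-dependence visible rather than hidden in a reference.

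One minor correction to your closing paragraph: the claim that $C_\eps$ \emph{necessarily} blows up as $\eps\to 0^+$ is too strong. As the paper points out in Remark~\ref{rem:projQ}, for $p=2$ the constants in (iii) and (iv) are in fact uniform in $\eps$, and your own rescaling argument---combined with the lattice-independence of the Mihlin bound for the Leray multiplier $I-\xi\otimes\xi/|\xi|^2$---suggests the same for (iii) in general $p$. The genuine obstacle, which the paper stresses, is not a blow-up of the operator norm of $\cP_\eps$ but the factor $\eps^{-1}$ hidden inside $\div_\eps$ on the right-hand side of (iv); this is what forces the careful choice of $\sigma_n$ in Lemma~\ref{lem:proj}.
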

\begin{proof}
For $\xi=(\xi',\xi^N)\in \RR^N\setminus \{0\}$, $(\xi',\tfrac{1}{\eps}\xi)\in \RR^{1\times N}$ has full rank independent of $\xi\neq 0$, which means that for fixed $\eps$, $div_\eps$ satisfies Murat's condition of constant rank (\cite{Mu81a}). Hence, Lemma~2.14 in \cite{FoMue99a} applies with $\cA:=\div_\eps$ and $\TTT=\cP_\eps$.
\end{proof}
\begin{rem}\label{rem:projQ}
If $p=2$ (avoiding the use of general Fourier multiplier theorems), it is easy to see from the proof of Lemma~2.14 in \cite{FoMue99a} that  (iii) and (iv) actually hold with a constants independent of $\eps$. However, we do not exploit this fact, and in any case,
the factor $\frac{1}{\eps}$ hidden in the $\div_\eps$ on the right hand side of (iv) is still a major obstacle even if the constant in (iv) does not blow up as $\eps\to 0^+$.
\end{rem}
For technical reasons, it is important for us to be able to work with sequences which are not $\div_\eps$-free but can be projected to $\div_\eps$-free sequences with an error that is negligible in the limit $\eps\to 0^+$.
The following application of Lemma~\ref{lem:projQ} gives a useful sufficient criterion for sequences with this property.
\begin{lem}\label{lem:proj}
Let $\Omega\subset \RR^N$ be open and bounded, let $1<p<\infty$ and let $\eps_n\to 0^+$. Then there exists a sequence $\sigma_n\to 0^+$ such that 
the following holds: For every sequence $(u_n)\subset L^p(\Omega;\RR^N)$ with $u_n \rightharpoonup 0$ in $L^p$ and
\begin{align} \label{lproj-1}	
	\bignorm{\div_{\eps_n}u_n}_{W^{-1,p}(\Omega)}+\bignorm{\big(u_n',\tfrac{1}{\eps_n}u_n^N\big)}_{W^{-1,p}(\Omega;\RR^N)}\leq \sigma_n,
\end{align}
where $u_n':=(u_n^1,\ldots,u_n^{N-1})$, there exists a sequence $(v_n)\subset L^p(\Omega;\RR^N)$ such that $\div_{\eps_n} v_n=0$ in $\Omega$ and $u_n-v_n\to 0$ in $L^p(\Omega;\RR^N)$.
\end{lem}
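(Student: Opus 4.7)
The plan is to apply the projection $\cP_{\eps_n}$ from Lemma~\ref{lem:projQ} on a cube, but only after multiplying $u_n$ by a cut-off so that extension by zero does not introduce an uncontrollable boundary contribution. Fix once and for all an open cube $Q\supset\bar\Omega$ and a sequence $\eta_n\in C_c^\infty(\Omega)$ with $\eta_n\equiv 1$ on sets $\Omega_n\subset\subset\Omega$ such that $\measN{\Omega\setminus\Omega_n}\to 0$. Write $M_n:=1+\|\eta_n\|_{C^2(\bar\Omega)}$, and let $C^{(P)}_{\eps_n}$ denote the constant from (iii)--(iv) of Lemma~\ref{lem:projQ} applied to the cube $Q$. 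I would then define
\[
	\sigma_n:=\min\bigl(\tfrac{1}{n},\tfrac{1}{nM_nC^{(P)}_{\eps_n}}\bigr),
\]
so that $\sigma_n\to 0^+$ independently of the particular sequence $(u_n)$. Given any sequence satisfying \eqref{lproj-1}, I would set $\tilde u_n:=\eta_n u_n$, extended by $0$ to $L^p(Q;\RR^N)$, and $v_n:=(\cP_{\eps_n}\tilde u_n)|_\Omega$. By (i) of Lemma~\ref{lem:projQ}, $\div_{\eps_n}\cP_{\eps_n}\tilde u_n=0$ on all of $\RR^N$, so $\div_{\eps_n}v_n=0$ in $\Omega$, and the only remaining task is to show $v_n-u_n\to 0$ strongly in $L^p(\Omega;\RR^N)$.

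I would split $v_n-u_n=(\cP_{\eps_n}\tilde u_n-\tilde u_n)|_\Omega+(\eta_n-1)u_n$. The second piece is supported in $\Omega\setminus\Omega_n$ and is thus bounded by $\|u_n\|_{L^p(\Omega\setminus\Omega_n)}$, which tends to $0$ by the $L^p$-equi-integrability of $(|u_n|^p)$ implied by weak convergence, combined with the fact that $\measN{\Omega\setminus\Omega_n}\to 0$ was fixed a priori. For the first piece, I would invoke (iv) of Lemma~\ref{lem:projQ} and aim at the estimate
\[
	\bignorm{\div_{\eps_n}\tilde u_n}_{W^{-1,p}(Q)}\leq 2M_n\sigma_n,
\]
which together with the choice of $\sigma_n$ gives $\|\cP_{\eps_n}\tilde u_n-\tilde u_n\|_{L^p(Q)}\leq 2C^{(P)}_{\eps_n}M_n\sigma_n\leq 2/n\to 0$.

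To obtain the displayed estimate, the product rule yields, as distributions on $Q$,
\[
	\div_{\eps_n}\tilde u_n=\eta_n\div_{\eps_n}u_n+\bigl(u_n',\tfrac{1}{\eps_n}u_n^N\bigr)\cdot\nabla\eta_n,
\]
both summands supported in $\supp\eta_n\subset\Omega$. For a test function $\varphi\in W_0^{1,p'}(Q)$ with $\|\varphi\|_{W^{1,p'}(Q)}\leq 1$, the first term pairs as $\langle\div_{\eps_n}u_n,\eta_n\varphi\rangle_\Omega$, where $\eta_n\varphi\in W_0^{1,p'}(\Omega)$ with $\|\eta_n\varphi\|_{W^{1,p'}(\Omega)}\leq M_n$, so the first part of \eqref{lproj-1} bounds it by $\sigma_n M_n$. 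The second term pairs as $\langle(u_n',\tfrac{1}{\eps_n}u_n^N),\varphi\nabla\eta_n\rangle_\Omega$ with $\varphi\nabla\eta_n\in W_0^{1,p'}(\Omega;\RR^N)$ again of norm $\leq M_n$, and the second part of \eqref{lproj-1} bounds it by $\sigma_n M_n$. Adding gives the claim.

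The main obstacle is the cut-off commutator term $(u_n',\tfrac{1}{\eps_n}u_n^N)\cdot\nabla\eta_n$: estimated in $L^p(Q)$ it is of order $1/\eps_n$, which, combined with the possible $\eps$-dependent blow-up of $C^{(P)}_{\eps_n}$ (cf.~Remark~\ref{rem:projQ}) and the inevitable sharpening $M_n\to\infty$ as $\Omega_n$ exhausts $\Omega$, would be fatal. The second summand of hypothesis \eqref{lproj-1}, phrased in the weaker $W^{-1,p}$-norm, is exactly the slack needed to absorb both $C^{(P)}_{\eps_n}$ and $M_n$ via the prescribed $\sigma_n$.
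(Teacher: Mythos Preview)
Your strategy coincides with the paper's: multiply by a compactly supported cut-off, control $\div_{\eps_n}$ of the truncated sequence via the product rule and both halves of hypothesis \eqref{lproj-1}, then invoke the projection of Lemma~\ref{lem:projQ} on a fixed cube. The treatment of the piece $\cP_{\eps_n}\tilde u_n-\tilde u_n$ is correct and essentially line-for-line what the paper does.

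The gap is in the sentence ``tends to $0$ by the $L^p$-equi-integrability of $(|u_n|^p)$ implied by weak convergence''. Weak convergence in $L^p$ only yields boundedness in $L^p$, hence equi-integrability of $(|u_n|)$, \emph{not} of $(|u_n|^p)$: the concentrating sequence $u_n=n^{1/p}\chi_{E_n}$ with $|E_n|=1/n$ satisfies $u_n\rightharpoonup 0$ in $L^p$ yet $\int_{E_n}|u_n|^p=1$ for every $n$, and if $E_n\subset\Omega\setminus\Omega_n$ then your a-priori fixed cut-offs give $\|(\eta_n-1)u_n\|_{L^p}\not\to 0$. The $W^{-1,p}$-conditions in \eqref{lproj-1} do not by themselves exclude such boundary concentration. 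The paper does not appeal to equi-integrability here; it keeps a whole family $(\varphi_k)_k$ of cut-offs and, \emph{after} the particular sequence $(u_n)$ is given, selects $j(n)\to\infty$ depending on $(u_n)$ so that $(1-\varphi_{j(n)})u_n\to 0$ in $L^p$---which is always possible for a single bounded $L^p$-sequence, since for each fixed $n$ dominated convergence gives $\|(1-\varphi_k)u_n\|_{L^p}\to 0$ as $k\to\infty$. (That $\sigma_n$ in the paper is written in terms of $\varphi_{j(n)}$ makes the quantifier order there somewhat delicate as well, but the essential point stands: the cut-off must be allowed to adapt to $(u_n)$, which your fixed choice of $(\eta_n)$ does not permit.)
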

\begin{proof}
For every $k\in \NN$ choose a function $\varphi_k\in C_c^\infty (\Omega;[0,1])$ such that $\varphi_k(x)=1$ for every $x\in \Omega$ with $\dist{x}{\partial\Omega}\geq \tfrac{1}{k}$. 
Moreover, choose a cube $Q$ containing $\Omega$ and a sequence $\tilde{\sigma}_n\to 0^+$ such that 
$C_{\eps_n}\tilde{\sigma}_n\to 0$ with the constants of Lemma~\ref{lem:projQ} (iv) (which also depend on $Q$).
We define
$$
	\sigma_n:=\norm{\varphi_{j(n)}}_{W^{2,\infty}(\Omega)}^{-1}\tilde{\sigma}_n~~\text{and}~~\tilde{u}_n:=\varphi_{j(n)} u_n
$$
with a sequence of integers $j(n)\to \infty$ (fast enough) such that $u_n-\tilde{u}_n\to 0$ in $L^p(\Omega;\RR^N)$.
Since
$$
	\div_{\eps_n} (\varphi_k u_n)= \varphi_k \div_{\eps_n}u_n+
	\nabla \varphi_k\cdot \big(u_n',\tfrac{1}{\eps_n}u_n^N\big),
$$
we have that
$$
	\norm{\div_{\eps_n} (\varphi_k u_n)}_{W^{-1,p}}
	\leq \norm{\varphi_k}_{W^{2,\infty}(\Omega)}\Big( \norm{\div_{\eps_n}u_n}_{W^{-1,p}}+
	 \bignorm{\big(u_n',\tfrac{1}{\eps_n}u_n^N\big)}_{W^{-1,p}}\Big).
$$
Hence, \eqref{lproj-1} implies that
$$
	C_{\eps_n} \norm{\div_{\eps_n}\tilde{u}_n}_{W^{-1,p}(\Omega)}
	=C_{\eps_n} \norm{\div_{\eps_n}\tilde{u}_n}_{W^{-1,p}(Q)}
	\leq C_{\eps_n}\tilde{\sigma}_n\to 0
$$
as $n\to\infty$.
The sequence $v_n:=\cP_n \tilde{u}_n\in L^p(Q;\RR^N)$, restricted to $\Omega$, now has the desired properties by Lemma~\ref{lem:projQ}.
\end{proof}
Applying Lemma~\ref{lem:proj} is not easy because $\sigma_n$ might converge to zero extremely fast. Nevertheless, it turns out to be possible for certain sequences
constructed below, first in Proposition~\ref{prop:ex2} for a simple example and then in Proposition~\ref{prop:ub1} as the first step in proof of the upper bound.
\section{An example and a related relaxation problem}
When studying the dimension reduction problem for functionals depending on gradients (instead of divergence-free functions), one usually
relies on a characterization of the associated relaxed functional in the limit setting, 
both as a lower semicontinuity result for the lower bound and as a first step in the construction of a sequence for the upper bound. 
In our framework, the associated relaxed functional in the limit setting corresponds to the functional $\tilde{F}_0$ introduced below.
Although $\tilde{F}_0$ does not play a role in the proof our main result, we briefly discuss it here to point out the somewhat surprising fact that $\tilde{F}_0$ does not always give the right limiting model for the divergence-free dimension reduction problem and may even be nonlocal, in sharp contrast to the gradient case. In addition, the crucial idea for the proof of the upper bound in our main result is developed in Proposition~\ref{prop:ex2} for a simple model problem.

In the following, we consider the functional
\begin{align*}
  \tilde{F}(u):=\left\{
  \begin{array}{ll}
  \int_\Omega f(x,u)\,dx~~&\text{if $u\in \cU_0$,}\\
  +\infty ~~&\text{if $u\notin \cU_0$.}
  \end{array}\right.&
\end{align*}
By definition, the relaxed functional associated to $\tilde{F}$ is given by the lower semicontinuous hull of $\tilde{F}$ with respect to weak convergence in $L^p$.
For $u\in L^p(\Omega;\RR^N)$, it can be expressed by
\begin{align}\label{F0}	
	\tilde{F}_0(u):=\Gamma-\lim \tilde{F}(u)=
	\inf\mysetl{\liminf \tilde{F}(u_n)}{u_n\rightharpoonup u~\text{weakly in $L^p$}}	 
\end{align}
Here, note that since $\tilde{F}$ does not depend on $n$, $\Gamma-\liminf \tilde{F}=\Gamma-\limsup \tilde{F}$. Moreover,
$\cU_0$ is weakly closed in $L^p$, whence $\tilde{F}_0(u)$ is finite if and only if $u\in \cU_0$. 
\begin{prop}[partial representation of $\tilde{F}_0$]\label{prop:F0reppart}
Let $f:\omega\times \RR^N\to [0,\infty)$ (identified with $f:\Omega\times\RR^N\to \RR$ constant in $x_N$) satisfy \eqref{f0}-- \eqref{f2}.
Then for every $u\in L^p(\omega;\RR^N)$ (identified with $u\in L^p(\Omega;\RR^N)$ with \emph{all} components independent of $x_N$), we have $\tilde{F}_0(u)=F^{**}(u)$, the convexified functional.
\end{prop}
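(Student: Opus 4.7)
My plan is to prove the two inequalities $F^{**}(u)\le\tilde{F}_0(u)\le F^{**}(u)$ separately, the first by weak lower semicontinuity applied to the convexified functional, the second by a trivial lift of a classical unconstrained relaxation sequence on the cross-section $\omega$.

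For the lower bound I would first check that $f^{**}$ inherits \eqref{f0}--\eqref{f2} from $f$: it is Carath\'eodory in $(x,\mu)$ (the convex envelope of a Carath\'eodory integrand is again Carath\'eodory), the upper bound $f^{**}\le f\le C|\mu|^p+C$ is inherited directly, and the coercivity $f\ge\frac1C|\mu|^p-C$ survives because $\frac1C|\mu|^p-C$ is itself convex in $\mu$ and hence bounded by $f^{**}$. Theorem~\ref{thm:wlsc} then gives that $F^{**}$ is weakly lower semicontinuous on $L^p(\Omega;\RR^N)$. Since $f^{**}\le f$ pointwise, $F^{**}(v)\le \tilde{F}(v)$ for every $v\in L^p(\Omega;\RR^N)$ (trivially if $v\notin\cU_0$). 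Therefore, for any admissible sequence $u_n\rightharpoonup u$ in $L^p$,
\[
F^{**}(u)\le\liminf_{n\to\infty} F^{**}(u_n)\le \liminf_{n\to\infty}\tilde{F}(u_n),
\]
and taking the infimum over such sequences in the definition \eqref{F0} yields $F^{**}(u)\le \tilde{F}_0(u)$.

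For the upper bound I would exploit that $u$ depends only on $x'$. Applying the classical relaxation theorem for unconstrained integral functionals on $L^p(\omega;\RR^N)$ (see e.g.~\cite{Dal93B}), one produces a sequence $w_n\in L^p(\omega;\RR^N)$ with $w_n\rightharpoonup u$ weakly in $L^p(\omega;\RR^N)$ and $\int_\omega f(x',w_n(x'))\,dx'\to \int_\omega f^{**}(x',u(x'))\,dx'$. I then define the trivial lift $u_n(x',x_N):=w_n(x')$ on $\Omega$. By construction $u_n$ is independent of $x_N$, hence $\partial_N u_n^N=0$ and $u_n\in\cU_0$; testing against $\varphi\in L^{p'}(\Omega;\RR^N)$ via Fubini and the weak convergence of $w_n$ shows $u_n\rightharpoonup u$ in $L^p(\Omega;\RR^N)$; and
\[
\tilde{F}(u_n)=\int_0^1\!\!\int_\omega f(x',w_n(x'))\,dx'\,dx_N=\int_\omega f(x',w_n(x'))\,dx'\ \longrightarrow\ F^{**}(u).
\]
Combined with \eqref{F0} this gives $\tilde{F}_0(u)\le F^{**}(u)$.

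The main point---and the reason a clean representation is available precisely in this restricted class of $u$---is that both $u$ and the approximants $u_n$ are constant in $x_N$, so the differential constraint $\partial_N u_n^N=0$ is fulfilled automatically and the problem reduces to the standard unconstrained convex-envelope construction on $\omega$. There is no genuine obstacle here; the difficulty lies in the absence of this structure for a generic $u\in\cU_0$, where the tangential components $u^\alpha$ do depend on $x_N$ and oscillations in $x_N$ are needed to reach $F^{**}$ but would typically destroy $\partial_N u_n^N=0$. This asymmetry is exactly what is responsible for $\tilde{F}_0$ becoming nonlocal in the general case, as illustrated by the example in Proposition~\ref{prop:ex1}.
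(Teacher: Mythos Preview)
Your proof is correct and follows essentially the same route as the paper: the lower bound via weak lower semicontinuity of $F^{**}$, and the upper bound by restricting to sequences that are constant in $x_N$, which reduces the problem to the classical unconstrained relaxation on $\omega$. The paper phrases the upper bound as a chain of inequalities rather than an explicit lift, but the content is identical.
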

\begin{proof}
Since $f\geq f^{**}$ and $F^{**}$ is weakly lower semicontinuous in 
$L^p(\Omega;\RR^N)$, it is clear that $\tilde{F}_0(u)\geq F^{**}(u)$.
On the other hand, for any $u\in L^p(\Omega;\RR^N)$ which is constant in $x_N$, we have
\begin{align*}
	\tilde{F}_0(u)&=
	\inf\mysetl{\liminf \tilde{F}(u_n)}
  	 {u_n\rightharpoonup u~\text{weakly in $L^p(\Omega;\RR^N)$, $\partial_N u^N_n=0\in\RR$}}\\
	&\leq \inf\mysetl{\liminf \tilde{F}(u_n)}
  	 {u_n\rightharpoonup u~\text{weakly in $L^p(\Omega;\RR^N)$, $\partial_N u_n=0\in\RR^N$}}\\
  &= \inf\mysetl{\liminf \int_\omega f(x',\tilde{u}_n)\,dx'}
  	 {\tilde{u}_n\rightharpoonup u~\text{weakly in $L^p(\omega;\RR^N)$}}\\	 
  &= \int_\omega f^{**}(x',u)\,dx'
  = \int_{\Omega} f^{**}(x',u)\,dx,
\end{align*}
where we used that $\int_\omega f^{**}(x',v)\,dx'$ is the weakly lower semicontinuous hull of $v\mapsto \int_{\omega}f(x',v)\,dx'$ in $L^p$.
\end{proof}
\begin{ex} 
\label{ex:nonlocal}
Let $p=6$, let $N=2$, let $f:\RR^2\to \RR$ be the three-well potential given by
\begin{align*}	
	&f(\mu):=\abs{\mu-\zeta_1}^2\abs{\mu-\zeta_2}^2\abs{\mu-\zeta_3}^2,\\
	&\text{with}~
	\zeta_1:=(0,-1),~
	\zeta_2:=(1,0),~
	\zeta_3:=(0,1),
\end{align*}
and consider the function $u_0\in \cU_0$ given by
\begin{align*}
	u_0(x_1,x_2):=\left\{\begin{array}{ll}
		(0,0)~&\text{if $x_2\in (0,{\textstyle \frac{1}{2}}]$},\\
		(1,0)~&\text{if $x_2\in ({\textstyle \frac{1}{2}},1)$}.
	\end{array}\right.~~
\end{align*}
\end{ex}
\begin{prop}[Possible nonlocal character of $\tilde{F}_0$]\label{prop:ex1}
In the situation of Example~\ref{ex:nonlocal},
we have that
\begin{align*}
	\tilde{F}_0(u_0)>0=\frac{\nabs{\omega\times (0,{\textstyle \frac{1}{2}})}}{\abs{\Omega}}\tilde{F}_0((0,0))
	+\frac{\nabs{\omega\times ({\textstyle \frac{1}{2}},1)}}{\abs{\Omega}}\tilde{F}_0((1,0)). 
\end{align*}	
In particular, $\tilde{F}_0(u)$ cannot be written in the form $\int_\Omega V(u)\,dx$ with some function 
$V:\RR^2\to \RR$, and $\tilde{F}_0(u_0)>F^{**}(u_0)$.
\end{prop}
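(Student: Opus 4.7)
The plan breaks into three pieces. First, I would verify that the right-hand side of the displayed inequality vanishes. The value $\tilde{F}_0((1,0))=0$ is immediate because the constant sequence $u_n\equiv(1,0)$ lies in $\cU_0$ and $f(\zeta_2)=0$. For $\tilde{F}_0((0,0))$, I would take an oscillating sequence $u_n(x_1,x_2):=(0,h_n(x_1))$ where $h_n$ takes only the values $-1$ and $+1$ with balanced mean (e.g.\ $h_n(x_1)=\mathrm{sgn}(\sin(n\pi x_1))$). Then $u_n\in\cU_0$ since $\partial_2 u_n^2=0$, $u_n\rightharpoonup(0,0)$ weakly in $L^6$, and $u_n$ takes values only in $\{\zeta_1,\zeta_3\}$, so $f(u_n)\equiv 0$.

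Second, I would establish $\tilde{F}_0(u_0)>0$ by contradiction. Assume a sequence $(u_n)\subset\cU_0$ with $u_n\rightharpoonup u_0$ in $L^6$ and $\int_\Omega f(u_n)\,dx\to 0$. Because the wells $Z=\{\zeta_1,\zeta_2,\zeta_3\}$ are isolated zeros of $f$ and $f$ grows quadratically near each of them, $u_n$ must concentrate in measure on $Z$. The crucial structural fact is that $u_n^2$ depends only on $x_1$: for each $x_1$ the entire vertical fiber shares the single value $u_n^2(x_1)$, which therefore has to lie close to one of the second coordinates of $Z$, namely $\{-1,0,+1\}$; otherwise that fiber contributes a positive amount, bounded from below, to the energy. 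Partitioning $\omega$, up to a residual of vanishing measure, into sets $A_n^1,A_n^2,A_n^3$ on which $u_n^2$ is within $1/4$ of $-1,\,0,\,+1$ respectively, the only well within reach on $A_n^j\times(0,1)$ is $\zeta_j$, so $u_n^1\approx\zeta_j^1$ outside a set of $o(1)$ measure.

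Third, I would close the argument by testing $u_n^1\rightharpoonup u_0^1$ against $\mathbf{1}_{\omega\times(0,1/2)}$ and $\mathbf{1}_{\omega\times(1/2,1)}$. Combining the structural estimate of the previous step with the uniform $L^6$ bound and H\"older's inequality yields
\begin{align*}
\int_{\omega\times(0,1/2)}u_n^1\,dx=\tfrac{1}{2}|A_n^2|+o(1), \qquad
\int_{\omega\times(1/2,1)}u_n^1\,dx=\tfrac{1}{2}|A_n^2|+o(1),
\end{align*}
since $u_n^1$ is approximately constant in $x_2$ on each $A_n^j\times(0,1)$. Passing to the limit, the left-hand sides converge to $0$ and $|\omega|/2$ respectively, forcing the contradictory conclusions $|A_n^2|\to 0$ and $|A_n^2|\to|\omega|$. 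The two remaining assertions then follow quickly: if $\tilde{F}_0(u)=\int_\Omega V(u)\,dx$ for some $V:\RR^2\to\RR$, evaluating at $u_0$ and at the constants $(0,0),(1,0)$ would force $V(0,0)=V(1,0)=0$ and hence $\tilde{F}_0(u_0)=0$, contradicting the strict positivity just proved; and since $f\ge 0$ gives $f^{**}\ge 0$ while $f^{**}\le f$ vanishes on $Z$, $f^{**}$ vanishes on the convex hull of $Z$, whence $F^{**}(u_0)=0<\tilde{F}_0(u_0)$.

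The most delicate point is quantifying the heuristic that $u_n^1$ is approximately constant in $x_2$ on each slab $A_n^j\times(0,1)$: the value $u_n^2(x_1)$ is literally constant in $x_2$, but the closeness of $u_n^1$ to $\zeta_j^1$ is only in measure, so controlling the integrals over the two horizontal strips hinges on combining the energy concentration with the $L^6$-bound via H\"older's inequality to absorb the small bad set.
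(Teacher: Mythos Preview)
Your argument is correct and complete. The key quantitative step you flag at the end is in fact easy once one notices that on each slab $A_n^j\times(0,1)$ one has $|u_n-\zeta_i|\ge 3/4$ for $i\neq j$ (since $|u_n^2-\zeta_i^2|\ge 3/4$), so that $f(u_n)\ge (3/4)^4|u_n-\zeta_j|^2$ pointwise there; this gives $\int_{A_n^j\times(0,1)}|u_n^1-\zeta_j^1|\,dx\to 0$ directly, without even needing the $L^6$ bound except on the residual set $R_n$.

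Your route is genuinely different from the paper's. The paper argues via Young measures: passing to a subsequence, $u_n$ generates a Young measure $\nu_x$, and $\int f(u_n)\to 0$ forces $\nu_x$ to be supported on $\{\zeta_1,\zeta_2,\zeta_3\}$ with coefficients uniquely determined by the barycenter $u_0(x)$; the contradiction then comes from the observation that the marginal of $\nu_x$ in the second coordinate is the Young measure of $u_n^2$ and hence must be independent of $x_2$, while the computed coefficients are not. Your argument encodes the same obstruction by elementary means: partitioning $\omega$ according to which value in $\{-1,0,1\}$ the fiber value $u_n^2(x_1)$ is near, and then testing the weak convergence of $u_n^1$ against the two horizontal strips. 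What you gain is a self-contained proof that avoids the fundamental theorem on Young measures; what the paper's approach buys is a cleaner bookkeeping (the three coefficients $\sigma_j(x)$ are read off from a linear system, and the contradiction is immediate) and a template that generalises more transparently to the higher-dimensional $(N+1)$-well analogue mentioned in the paper's remark.

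For the first part, the paper appeals to its Proposition on the partial representation of $\tilde{F}_0$ (which gives $\tilde{F}_0=F^{**}$ on constants), whereas your explicit oscillating sequence for $(0,0)$ is a perfectly good direct substitute.
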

\begin{rem}
As recently discovered in \cite{DaFoLe09ap}, the lower semicontinuous hull with respect to strong convergence in $L^2$ of certain integral functionals of the form 
$u\mapsto \int_\Omega f(u,\nabla u)\,dx$ 
can also be nonlocal, if there is a lack of coercivity with respect to the gradient variable.
\end{rem}
\begin{proof}[Proof of Proposition~\ref{prop:ex1}]
Since $f^{**}=0$ on the closed triangle formed by $\zeta_1$, $\zeta_2$ and $\zeta_3$,
$\tilde{F}_0((0,0))=\tilde{F}_0((1,0))=0$ by Proposition~\ref{prop:F0reppart}.
To prove that $\tilde{F}_0(u_0)>0$, we proceed indirectly. Suppose that $\tilde{F}_0(u_0)=0$.
By a standard diagonalization argument, we may choose a sequence $u_n\in \cU_0$ with
$u_n\rightharpoonup u_0$ weakly in $L^6(\Omega,\RR^2)$
such that $\tilde{F}_0(u_0)=\lim \tilde{F}(u_n)$.
By passing to a subsequence (not relabeled), we may assume that 
$u_n$ generates a Young measure $\nu_x$, which for a.e.~$x\in \Omega$ is a probability measure on $\RR^2$, and by the fundamental theorem for Young measures (see \cite{Ba89a}, \cite{Mue99a} or \cite{FoLe07B}, e.g.),
also exploiting that $f\geq 0$,
we get that 
\begin{align*}
	0=\tilde{F}_0(u_0)=\lim \int_\Omega f(u_n)dx \geq \int_\Omega\int_{\RR^2} f(\xi)d\nu_x(\xi)dx. 
\end{align*}	
Since $f$ vanishes only on $\{\zeta_1,\zeta_2,\zeta_3\}$, this implies that $\nu_x$ is supported in
$\{\zeta_1,\zeta_2,\zeta_3\}$ for a.e.~$x$, i.e., 
\begin{align}\label{murepres1}
	\textstyle{\nu_{x}=\sum_{j=1}^{3}\sigma_j(x)\delta_{\zeta_j}},
\end{align}
where $\delta_z$ denotes the Dirac mass concentrated at the point $z$ in $\RR^2$.
Moreover, since $\int_{\RR^2} \xi\,d\nu_x(\xi)=u_0(x)$ and $\nu_x$ is a probability measure for a.e.~$x$, 
the coefficients $\sigma_j(x)\in [0,1]$ are determined by the linear system
\begin{align*}
	\textstyle{\sum_{j=1}^{3}\sigma_j(x) \zeta_j=u_0(x)~~\text{and}~~\sum_{j=1}^{3}\sigma_j(x)=1}.
\end{align*}
One easily checks that the unique solution of this system is given by
\begin{align}
\begin{aligned}\label{murepres2}
	&\sigma_1(x)=\tfrac{1}{2},~~\sigma_2(x)=0,~~\sigma_3(x)=\tfrac{1}{2}~~
	&\text{if $x_2\leq \tfrac{1}{2}$ (i.e., $u_0(x)=(0,0)$)},\\
	&\sigma_1(x)=0,~~\sigma_2(x)=1,~~\sigma_3(x)=0~~
	&\text{if $x_2> \tfrac{1}{2}$ (i.e., $u_0(x)=(1,0)$)}.
\end{aligned}
\end{align}
In addition, the marginal of $\nu_x$ on the second coordinate axis,
\begin{align*}
	\nu_x^2(A):=\nu_x(\omega\times A)~~\text{for $A\subset (0,1)$ Borel-measurable},
\end{align*}
is the Young measure generated by $u_n^2$ and thus independent of $x_2$.
However, this contradicts \eqref{murepres1} and \eqref{murepres2}, because the latter imply that
$\nu_x^2=\sigma_1(x)\delta_{-1}+\sigma_2(x)\delta_{0}+\sigma_3(x)\delta_{1}$, and the coefficients are not constant in $x_2$ (only piecewise).
\end{proof}
The dimension reduction problem is different because the constraint $\div_\eps u_\eps=0$ is actually genuinely less restrictive than $\partial_N u^N_\eps=0$:
\begin{prop}\label{prop:ex2}
In the situation of Example~\ref{ex:nonlocal}, for every given pair of sequences $\eps_n\to 0^+$ and $\sigma_n\to 0^+$, 
there exists a bounded sequence
$(u_n)\subset L^\infty(\Omega;\RR^N)$ such that $u_n\rightharpoonup 0$ in $L^p$,
\begin{align}\label{exfconv}
	\int_\Omega f(u_n+u_0)\,dx\to \int_\Omega f^{**}(u_0)\,dx=0,
\end{align}
and 
\begin{align}\label{exdiveconv}
	\norm{\div_{\eps_n} u_n}_{W^{-1,p}(\Omega)}
	+\bignorm{\big(u_n',
	\tfrac{1}{\eps_n}u_n^N\big)}_{W^{-1,p}(\Omega;\RR^N)}
	\leq \sigma_n
\end{align}	
for every $n$. In particular,
$u_n$ can be projected onto $\cU_{\eps_n}$ with an error that goes to zero strongly in $L^p$ by Lemma~\ref{lem:proj},
and since $\div_{\eps_n} u_0=\div'u_0'=0$, this entails that $\Gamma-\liminf F_{\eps_n}(u_0)\leq 0<\tilde{F}_0(u_0)$.
\end{prop}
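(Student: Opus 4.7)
The plan is an explicit construction of $u_n$ based on a high-frequency in-plane oscillation, localized to the lower half slice $\{x_2<\tfrac12\}$ by a thin smooth cut-off in the vertical variable. Fix two vanishing scales $\eta_n,\delta_n\to 0^+$ to be tuned at the end, let $h_n\colon\RR\to\{-1,+1\}$ be the $\eta_n$-periodic square wave of mean zero, and let $\psi_n\in C^\infty((0,1);[0,1])$ equal $1$ on $(0,\tfrac12-\delta_n)$ and $0$ on $(\tfrac12,1)$, with $\|\psi_n'\|_\infty\leq C/\delta_n$. Set
\begin{align*}
  u_n(x_1,x_2):=\bigl(0,\,\psi_n(x_2)\,h_n(x_1)\bigr)\in L^\infty(\Omega;\RR^2).
\end{align*}
Since $\|u_n\|_\infty\leq 1$, $h_n\rightharpoonup 0$, and $\psi_n\to \mathbf{1}_{(0,1/2)}$ boundedly, dominated convergence against smooth test functions gives $u_n\rightharpoonup 0$ weakly in $L^p$.

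The construction is tailored to the wells of $f$: on $\{x_2<\tfrac12-\delta_n\}$, where $u_0=(0,0)$, one has $u_n+u_0=(0,\pm 1)\in\{\zeta_1,\zeta_3\}$; on $\{x_2>\tfrac12\}$, where $\psi_n=0$, one has $u_n+u_0=(1,0)=\zeta_2$. Hence $f(u_n+u_0)=0$ outside the transition strip $\omega\times(\tfrac12-\delta_n,\tfrac12)$, on which $|u_n+u_0|\leq 2$ so that $f(u_n+u_0)$ is bounded and the contribution to the integral is $O(\delta_n)\to 0$. This gives \eqref{exfconv}.

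The technical heart is \eqref{exdiveconv}: we must exploit oscillatory cancellation rather than settle for crude pointwise bounds. Since $u_n^1\equiv 0$, $\div_{\eps_n}u_n=\tfrac{1}{\eps_n}\psi_n'(x_2)\,h_n(x_1)$, whose naive size is $\sim 1/(\eps_n\delta_n)$. However, taking a bounded primitive $H_n$ of $h_n$ in $x_1$ with $\|H_n\|_\infty\leq C\eta_n$ (available because $h_n$ has mean zero over each period), integrating by parts in $x_1$ against any $\varphi\in W_0^{1,p'}(\Omega)$ of unit norm, and applying H\"older's inequality yields
\begin{align*}
  \Bigl\|\tfrac{1}{\eps_n}u_n^2\Bigr\|_{W^{-1,p}(\Omega)}\leq C\,\frac{\eta_n}{\eps_n},\qquad \|\div_{\eps_n}u_n\|_{W^{-1,p}(\Omega)}\leq C\,\frac{\eta_n}{\eps_n}\,\|\psi_n'\|_{L^p(0,1)}\leq \frac{C\,\eta_n}{\eps_n\,\delta_n^{1/p'}}.
\end{align*}
It therefore suffices to first choose $\delta_n\to 0^+$ (e.g.\ $\delta_n:=\min(\sigma_n,n^{-1})$) and then $\eta_n\leq \sigma_n\eps_n\delta_n^{1/p'}/C$ to secure \eqref{exdiveconv}. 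The final assertion then follows by applying Lemma~\ref{lem:proj} to $u_n$ to produce $v_n\in\cU_{\eps_n}$ with $u_n-v_n\to 0$ in $L^p$, setting $w_n:=u_0+v_n\in\cU_{\eps_n}$ (noting that $\div_{\eps_n}u_0=0$ because $u_0^2\equiv 0$ and $u_0^1$ is independent of $x_1$), and invoking the standard continuity of $\mu\mapsto \int_\Omega f(\mu)\,dx$ from $L^p$ to $\RR$ (consequence of \eqref{f0}-\eqref{f1}) to pass from $\int f(u_0+u_n)\to 0$ to $\int f(w_n)\to 0$.

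The principal obstacle overcome by this design is the $\tfrac{1}{\eps_n}$ prefactor in the normal component of $\div_{\eps_n}$, which rules out naive oscillation in $x_2$: any oscillation of amplitude $1$ in the vertical direction would create a divergence error of order $1/\eps_n$ that cannot be erased in $W^{-1,p}$. By placing the oscillation in the in-plane variable $x_1$, every factor $\tfrac{1}{\eps_n}$ is paid for by a factor $\eta_n$ extracted from an antiderivative in $x_1$, while the mild $\delta_n^{-1/p'}$ loss coming from the $x_2$-cut-off is easily absorbed by sending $\eta_n\to 0$ sufficiently fast. This mechanism is the prototype of the construction later used to prove the upper bound in Theorem~\ref{thm:main}.
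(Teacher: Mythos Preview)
Your proof is correct and uses essentially the same construction as the paper: an in-plane oscillation in $x_1$ between $\zeta_1=(0,-1)$ and $\zeta_3=(0,1)$, localized to $\{x_2<\tfrac12\}$ by a smooth cut-off in $x_2$. The only technical variation is in establishing the $W^{-1,p}$ smallness \eqref{exdiveconv}: the paper argues via the compact embedding $L^p\hookrightarrow W^{-1,p}$ together with a diagonal choice $k(n)\to\infty$, whereas you make this quantitative through the bounded primitive $H_n$ and integration by parts in $x_1$.
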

\begin{proof}
For each $n$ fix a function $\varphi_n\in C_c^\infty((0,1);[0,1])$
such that $\varphi_n=1$ on $[\eps_n,1-\eps_n]$, and for $k\in\NN$ let
$$
	 w_k(t)=\big(w_k^1(t),w_k^2(t)\big):=\left\{
	\begin{alignedat}[c]{2}
	 	&\zeta_3=(0,1)~~&&\text{if $0<t\leq \tfrac{1}{2k}$},\\
	 	&\zeta_1=(0,-1)~~&&\text{if $\tfrac{1}{2k}<t\leq \tfrac{1}{k}$,}
	\end{alignedat}\right.
$$
extended periodically to a function $w_k:\RR\to \RR^2$ with period $\tfrac{1}{k}$.
Note that 
\begin{align}\label{pex2-1}
	w_k\rightharpoonup \tfrac{1}{2}\zeta_3+\tfrac{1}{2}\zeta_1=(0,0)
	~~\text{weakly in $L^p(T;\RR^2)$}
\end{align}	
for any bounded open set $T\subset \RR$.
We define $v_{k,n}\in L^p(\Omega;\RR^2)$ by 
$$
	v_{k,n}(x_1,x_2):=\left\{
	\begin{alignedat}[c]{2}
		&\varphi_n(2x_2)w_k(\tfrac{1}{\eps_n}x_1)~~ &&\text{if $0< x_2< \tfrac{1}{2}$,}\\
		&0 ~~&&\text{if $\tfrac{1}{2}\leq x_2<1$.}
	\end{alignedat}\right.
$$
Observe that although $v_{k,n}$ is not continuous, its jumps do not contribute to $\div_{\eps_n} v_{k,n}$ (as a distribution), and thus the latter is actually a function with 
$$
	\div_{\eps_n} v_{k,n}(x_1,x_2)=
	\frac{2}{\eps_n}\dot{\varphi}_n(2x_2)w^2_k(\tfrac{1}{\eps_n}x_1).
$$
In particular, as $k\to \infty$ for fixed $n$,
$\div_{\eps_n} v_{k,n}\rightharpoonup 0$ weakly in $L^p(\Omega)$ as a consequence of \eqref{pex2-1}, and thus $\div_{\eps_n} v_{k,n}\to 0$
strongly in $W^{-1,p}(\Omega)$, by compact embedding.
Analogously, we get that $v_{k,n}-u_0\to 0$ in in $W^{-1,p}(\Omega;\RR^N)$ as $k\to \infty$.
Hence, we may choose $k=k(n)$ with $k(n)\to \infty$ as $n\to \infty$ fast enough such that \eqref{exdiveconv} holds for $u_n:=v_{k(n),n}$. 
Again using \eqref{pex2-1}, it is not difficult to check that $u_n\rightharpoonup 0$ weakly in $L^p$, and \eqref{exfconv} holds as well.
\end{proof}
\begin{rem} The choice of the dimension $N=2$ is not crucial for Example~\ref{ex:nonlocal}, it is just the simplest possible case. In fact, a completely analogous argument can be used for suitable potentials $f$ with $N+1$ wells in $\RR^N$ for any $N\geq 2$.
\end{rem}
\section{The upper bound\label{sec:ub}}
In this section, we provide the remaining part of the proof of Theorem~\ref{thm:main}, namely the upper bound
$$
	\Gamma-\limsup F_\eps(u)\leq F^{**}(u)~~\text{for $u\in \cU_0$},
$$
by constructing a suitable recovery sequence. 
In particular, we need some results from convex analysis:
\begin{lem}[Carathéodory's theorem, see \cite{Ro70B}, e.g.]\label{lem:Car}
Let $g:\RR^N\to [0,\infty)$ be continuous. Then for every $\xi\in \RR^N$ and every $\delta>0$,
there exists an $m\in \{0,\ldots,N\}$ and $\xi_j\in\RR^N$, $\theta_j\in (0,1]$, $j=0,\ldots,m$,
such that $\sum_j \theta_j=1$, $\xi=\sum_j \theta_j \xi_j$,
$$
	g^{**}(\xi)
	~\leq~ \textstyle{\sum_{j=0}^m} \theta_j g(\xi_j) 
	~\leq~ g^{**}(\xi)+\delta,
$$
and the vectors
$\xi_j-\xi_0$, $j=1,\ldots,m$, are linearly independent. Here, $g^{**}$ denotes the convex envelope of $g$.
\end{lem}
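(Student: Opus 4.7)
My plan is to combine the standard representation
\[
  g^{**}(\xi)=\inf\bigg\{\sum_{j=1}^k\theta_j g(\xi_j):k\in\NN,~\theta_j\geq 0,~\sum_j\theta_j=1,~\sum_j\theta_j\xi_j=\xi\bigg\}
\]
with a minimality argument that forces affine independence of the points in a $\delta$-nearly optimal finite representation. First I would pick, for given $\xi\in\RR^N$ and $\delta>0$, any finite representation $\xi=\sum_{j=0}^m\theta_j\xi_j$ with $\theta_j>0$, $\sum_j\theta_j=1$ and $\sum_{j=0}^m\theta_j g(\xi_j)\leq g^{**}(\xi)+\delta$, and then select among all such representations one whose number of terms $m+1$ is minimal. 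The complementary bound $g^{**}(\xi)\leq\sum_j\theta_j g(\xi_j)$ is automatic from the infimum characterization.

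Next I would argue that for this minimal representation the vectors $\xi_j-\xi_0$, $j=1,\ldots,m$, must be linearly independent, whence $m\leq N$. Suppose not, and let $\lambda_1,\ldots,\lambda_m$ be scalars not all zero with $\sum_{j=1}^m\lambda_j(\xi_j-\xi_0)=0$. Setting $\lambda_0:=-\sum_{j=1}^m\lambda_j$ gives $\sum_{j=0}^m\lambda_j=0$ and $\sum_{j=0}^m\lambda_j\xi_j=0$. The perturbed weights $\theta_j(s):=\theta_j+s\lambda_j$ then satisfy $\sum_j\theta_j(s)=1$ and $\sum_j\theta_j(s)\xi_j=\xi$ for every $s\in\RR$, and remain positive for $|s|$ small. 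Choose the sign of $s$ so that $s\sum_j\lambda_j g(\xi_j)\leq 0$; this yields
\[
  \sum_{j=0}^m\theta_j(s)g(\xi_j)\leq \sum_{j=0}^m\theta_j g(\xi_j)\leq g^{**}(\xi)+\delta.
\]
Since $\sum_j\lambda_j=0$ while the $\lambda_j$ are not all zero, at least one $\lambda_j$ has the opposite sign to $s$, so as $|s|$ grows from $0$ some $\theta_j(s)$ hits zero at a finite first time $s^*$; the resulting representation is strictly shorter yet still satisfies the bound, contradicting minimality.

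The only input that is not entirely mechanical is the infimum formula for $g^{**}$ itself, but for continuous nonnegative $g$ it reduces to the classical Carath\'eodory theorem applied to the epigraph $\mathrm{epi}(g)\subset\RR^{N+1}$ (which on its own would give $N+2$ points), while the $\delta$-slack in the statement absorbs the closure issue in passing from $\mathrm{co}(\mathrm{epi}(g))$ to its closure. The rest of the argument is a routine finite-dimensional reduction and I do not anticipate any substantive difficulty.
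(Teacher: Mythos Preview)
The paper does not give its own proof of this lemma; it is stated with a citation to Rockafellar and used as a black box. Your argument is a correct and standard proof: the infimum formula for the convex envelope (which is how the paper defines $g^{**}$, so no closure issue actually arises) supplies a $\delta$-nearly optimal finite representation, and your minimality/perturbation step is exactly the classical reduction that forces affine independence and hence $m\leq N$. One small remark: your final paragraph about the epigraph and the closure issue is unnecessary here, since the paper explicitly takes $g^{**}$ to mean the convex envelope rather than the biconjugate, so the infimum representation holds on the nose; but this does not affect the validity of your argument.
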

\begin{lem}\label{lem:Car2}
Suppose that the assumptions of Lemma~\ref{lem:Car} hold.
If, in addition, there exist constants $p>1$ and $C>0$ such that
\begin{align}\label{lC-1}
	\frac{1}{C}\abs{\mu}^p-C\leq g(\mu) \leq C\abs{\mu}^p+C~~\text{for every $\mu\in\RR^M$},
\end{align}
then the assertion of Lemma~\ref{lem:Car} stays true even for $\delta=0$, and in this case, 
\begin{align}\label{lC-2}
	\abs{\xi_j}\leq K (\abs{\xi}+1)~~\text{for}~j=0,\ldots,m,
\end{align}
where $K$ is a constant that only depends on $p$ and $C$.
\end{lem}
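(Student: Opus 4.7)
The plan is to obtain both claims of Lemma~\ref{lem:Car2}---attainment ($\delta=0$) and the pointwise bound \eqref{lC-2}---by extracting a suitable limit from approximate representations delivered by Lemma~\ref{lem:Car}. I would first apply Lemma~\ref{lem:Car} with $\delta = \delta_n \searrow 0$ to obtain tuples $(m_n, \theta_j^{(n)}, \xi_j^{(n)})$ satisfying its conclusions together with $\sum_{j=0}^{m_n} \theta_j^{(n)} g(\xi_j^{(n)}) \to g^{**}(\xi)$, and then pass to a subsequence on which $m_n \equiv m$ is constant and $\theta_j^{(n)} \to \theta_j \in [0,1]$ for every $j$. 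The hard part will be attainment: coercivity alone permits $|\xi_j^{(n)}|$ to blow up precisely when $\theta_j^{(n)} \to 0$, and I must show that such degenerating indices contribute nothing in the limit.

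For the attainment step, coercivity \eqref{lC-1} gives $\sum_j \theta_j^{(n)} |\xi_j^{(n)}|^p$ uniformly bounded in $n$. I would split the indices into $J_1 := \{j : \theta_j > 0\}$ and $J_0 := \{j : \theta_j = 0\}$. For $j \in J_1$ the weight stays bounded below, so $|\xi_j^{(n)}|$ is bounded and we may extract $\xi_j^{(n)} \to \xi_j$. For $j \in J_0$ the crucial H\"older estimate
$$
\bigabs{\theta_j^{(n)} \xi_j^{(n)}} \leq \big(\theta_j^{(n)}\big)^{(p-1)/p} \big(\theta_j^{(n)} |\xi_j^{(n)}|^p\big)^{1/p} \to 0,
$$
which uses $p>1$ in an essential way, forces those terms to disappear. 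Consequently $\sum_{j \in J_1} \theta_j \xi_j = \xi$, $\sum_{j \in J_1} \theta_j = 1$, and by continuity and nonnegativity of $g$,
$$
\sum_{j \in J_1} \theta_j g(\xi_j) \leq \liminf_n \sum_{j=0}^{m} \theta_j^{(n)} g(\xi_j^{(n)}) = g^{**}(\xi),
$$
which combined with the reverse inequality from the definition of $g^{**}$ makes this an attaining representation using $|J_1| \leq m+1 \leq N+1$ points. If the vectors $\{\xi_j - \xi_{j_0}\}_{j \in J_1 \setminus \{j_0\}}$ turn out to be linearly dependent, I would invoke the standard reduction: along a nontrivial $(\lambda_j)$ with $\sum \lambda_j = 0 = \sum \lambda_j \xi_j$, perturb $\theta_j \mapsto \theta_j + t\lambda_j$; minimality of the energy at $t=0$ forces $\sum \lambda_j g(\xi_j) = 0$, so the energy stays constant along the perturbation, and extending $t$ until the first sign change of some weight drops a point. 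Iterating restores linear independence using only a subset of the $\xi_j$'s, so any bound established on these points is preserved.

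For the bound \eqref{lC-2}, I would exploit that $g^{**}$ is convex and finite on $\RR^N$ with two-sided $p$-growth $\tfrac{1}{C}|\eta|^p - C \leq g^{**}(\eta) \leq g(\eta) \leq C|\eta|^p + C$ inherited from $g$. Its subdifferential at $\xi$ is therefore nonempty, and a standard Legendre-duality estimate (the conjugate of a function with $p$-growth from above has $p'$-growth from below) yields $|v| \leq C'(|\xi|^{p-1} + 1)$ for every $v \in \partial g^{**}(\xi)$. The subgradient inequality $g(\xi_j) \geq g^{**}(\xi_j) \geq g^{**}(\xi) + v \cdot (\xi_j - \xi)$, weighted by $\theta_j$ and summed, has left side equal to $g^{**}(\xi)$ by attainment and right side equal to $g^{**}(\xi)$ by the constraints $\sum \theta_j \xi_j = \xi$, $\sum \theta_j = 1$; so these inequalities must be equalities term by term, giving $g(\xi_j) = g^{**}(\xi) + v \cdot (\xi_j - \xi)$ for each $j$. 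Substituting into coercivity yields
$$
\tfrac{1}{C}|\xi_j|^p - C \leq g^{**}(\xi) + |v|\,(|\xi_j| + |\xi|) \leq C''(|\xi|^p + 1) + C''(|\xi|^{p-1} + 1)|\xi_j|,
$$
and Young's inequality absorbs the mixed term into $\tfrac{1}{2C}|\xi_j|^p + C(|\xi|^p + 1)$, delivering the asserted bound $|\xi_j| \leq K(|\xi| + 1)$.
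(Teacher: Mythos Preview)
Your argument is correct. The main difference from the paper is in how you obtain attainment ($\delta=0$): the paper invokes directly the convex-analysis fact that for continuous $g$ with superlinear growth the supremum in $g^{**}(\xi)=\sup\{A(\xi):A~\text{affine},~A\leq g\}$ is attained at some $A_\xi$, and that $A_\xi$ touches $g$ from below at suitable points $\xi_j$; you instead run Lemma~\ref{lem:Car} with $\delta_n\downarrow 0$ and extract a limiting representation by compactness, using the H\"older/coercivity trick to kill the degenerating indices. Your route is more self-contained (no external citation needed), at the price of the extra reduction step to restore linear independence of the $\xi_j-\xi_0$ after passing to the limit. For the bound \eqref{lC-2} the two arguments are essentially the same idea in different language: the paper's touching affine function $A_\xi$ is precisely your $\mu\mapsto g^{**}(\xi)+v\cdot(\mu-\xi)$ with $v\in\partial g^{**}(\xi)$, and both proofs then feed the equality $g(\xi_j)=A_\xi(\xi_j)$ back into the two-sided $p$-growth to bound $|\xi_j|$ in terms of $|\xi|$.
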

\begin{proof}
With some background in convex analysis, this is not hard to prove, and we just sketch some details:
It is well known that the convex envelope of $g$ can be represented as
$$
	g^{**}(\xi)=\sup\mysetl{A(\xi)}{A:\RR^N\to \RR~\text{affine and}~A\leq g},~\xi\in\RR^N.
$$ 
If $g$ is (lower semi-)continuous and has superlinear growth, the supremum is attained at a suitable affine function $A_\xi$
(see \cite{FoLe07B}, e.g.), and $A_\xi$ always touches $g$ from below at suitable points $\xi_j$ as in 
Lemma~\ref{lem:Car} with $\delta=0$. In addition, as a consequence of \eqref{lC-1}, we have that
$$
	\frac{1}{C}\abs{\mu}^p-C\leq A_\xi(\mu)\leq C\abs{\mu}^p+C~~\text{for every $\mu \in \text{co}\{\xi_j\}$}
$$
(the convex hull of the points $\xi_j$, $j=0,\ldots,m$). Clearly, the existence of an affine function satisfying the latter implies that $\text{co}\{\xi_j\}$ is bounded for fixed $\xi$,
and it is not difficult to obtain more precise estimates that yield \eqref{lC-2}. 
\end{proof}

The following result is the crucial step towards the upper bound for $\Gamma-\limsup F_\eps$ in the general case.
\begin{prop}\label{prop:ub1}
Let $N\geq 2$, let $1\leq p<\infty$, let $I\subset (0,1)$ be an open interval and let $\eps_n\to 0^+$.
Then for every sequence $\tau_n\to 0^+$ and every pair of points $\zeta_1,\zeta_2\in \RR^N$ and numbers $\gamma_1,\gamma_2\in (0,1)$
such that $\zeta_1^N\neq \zeta_2^N$, $\gamma_1\zeta_1+\gamma_2\zeta_2=0$ and $\gamma_1+\gamma_2=1$, 
there exists a sequence $(v_n)\subset L^\infty(\RR^N;\RR^N)$
such that 
\begin{equation}\label{lub1-0}
	\norm{v_n}_{L^\infty}\leq \max\{\abs{\zeta_1},\abs{\zeta_2}\},
\end{equation}
\begin{equation}\label{lub1-2}
	\norm{\div_{\eps_n} v_n}_{W^{-1,p}(\Omega)}
	+\bignorm{\big(v'_n,\tfrac{1}{{\eps_n}}v^N_n\big)}_{W^{-1,p}(\Omega;\RR^N)}
	\leq \tau_n
\end{equation}
for every $n\in\NN$, 
\begin{equation}\label{lub1-1}
\begin{aligned}
	&v_n\rightharpoonup 0~~\text{in $L^p_\loc(\RR^N;\RR^N)$ as $n\to\infty$},~~\\
	&\supp(v_n)\subset \RR^{N-1}\times \textstyle{\bigcup_{z\in\ZZ}}
	\big({\eps_n} z+{\eps_n} I^{[{\eps_n}]} \big),
\end{aligned}
\end{equation}	
where $I^{[\eps]}:=\mysetl{t\in I}{\dist{t}{\partial I}\geq \eps}$, and 
\begin{equation}\label{lub1-1b}
\begin{aligned}
	&\measN{\{v_n=\zeta_j\}\cap U}
	\underset{n\to\infty}{\To} \gamma_j \measN{U}\abs{I}
	~\text{for every measurable set $U\subset \RR^{N}$}
\end{aligned}
\end{equation}
and $j=1,2$. 
\end{prop}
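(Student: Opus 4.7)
The plan is to generalize the prototype construction of Proposition~\ref{prop:ex2} to arbitrary admissible $\zeta_1,\zeta_2,\gamma_1,\gamma_2$. Set $\mu:=\zeta_2-\zeta_1$ (so $\mu^N\neq 0$); the hypotheses $\gamma_1\zeta_1+\gamma_2\zeta_2=0$ and $\gamma_1+\gamma_2=1$ force $\zeta_1=-\gamma_2\mu$ and $\zeta_2=\gamma_1\mu$. Let $\theta:\RR\to\{-\gamma_2,\gamma_1\}$ be a $1$-periodic step function taking value $-\gamma_2$ on a set of measure $\gamma_1$ per period and $\gamma_1$ on a set of measure $\gamma_2$ per period, so that $\theta$ has mean zero and $\theta\mu$ takes values $\zeta_1,\zeta_2$ with fractions $\gamma_1,\gamma_2$. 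Choose the ``wavevector'' $\eta_n\in\RR^N$ with $\eta_n':=(1,0,\ldots,0)\in\RR^{N-1}$ and $\eta_n^N:=-\eps_n\mu^1/\mu^N$; its crucial property is
\[
	\mu\cdot(\eta_n',\tfrac{1}{\eps_n}\eta_n^N)=\mu^1-\mu^1=0,
\]
i.e.~$\mu$ lies in the symbol kernel of $\div_{\eps_n}$ at frequency $\eta_n$. Finally, fix smooth $1$-periodic cutoffs $\chi_n:\RR\to[0,1]$ with $\supp\chi_n\subset I^{[\eps_n]}+\ZZ$, $\chi_n\equiv 1$ on $I^{[2\eps_n]}+\ZZ$, and $|\chi_n'|\leq C/\eps_n$. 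Define the pre-recovery sequence
\[
	v_{k,n}(x):=\theta(k\eta_n\cdot x)\,\mu\,\chi_n(x_N/\eps_n).
\]

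A direct calculation, with the kernel condition annihilating the ``body'' term (produced by differentiating $\theta$), leaves only the ``boundary'' term:
\[
	\div_{\eps_n}v_{k,n}(x)=\frac{\mu^N}{\eps_n^2}\,\theta(k\eta_n\cdot x)\,\chi_n'(x_N/\eps_n).
\]
For each fixed $n$, the factor $F_n(x):=(\mu^N/\eps_n^2)\chi_n'(x_N/\eps_n)$ is bounded (by a constant depending on $n$), while $\theta(k\eta_n\cdot x)\rightharpoonup 0$ weakly in $L^p(\Omega)$ as $k\to\infty$ by the standard averaging argument for mean-zero periodic oscillations. Hence $\div_{\eps_n}v_{k,n}=\theta(k\eta_n\cdot x)\,F_n\rightharpoonup 0$ weakly in $L^p(\Omega)$ as $k\to\infty$; the same reasoning, applied component-wise, handles $v_{k,n}'$ and $v_{k,n}^N/\eps_n$, each being $\theta(k\eta_n\cdot x)$ times a fixed bounded factor. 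The compact embedding $L^p(\Omega)\hookrightarrow W^{-1,p}(\Omega)$ (dual to Rellich--Kondrachov, valid for $1<p<\infty$ on bounded open $\Omega$) upgrades these weak $L^p$-convergences to strong convergence in $W^{-1,p}(\Omega)$.

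Consequently, for each $n$ I pick $k_n$ large enough that $v_n:=v_{k_n,n}$ satisfies \eqref{lub1-2}, diagonalizing simultaneously against a countable dense subset of $L^{p'}(B_R;\RR^N)$ on an exhaustion $B_R\uparrow\RR^N$ to secure $v_n\rightharpoonup 0$ weakly in $L^p_\loc(\RR^N;\RR^N)$. The pointwise bound \eqref{lub1-0} is immediate from $|v_n|\leq|\theta\mu|\,\chi_n\leq\max(|\zeta_1|,|\zeta_2|)$, and the support inclusion in \eqref{lub1-1} follows from $\supp\chi_n\subset I^{[\eps_n]}+\ZZ$. For the equidistribution statement \eqref{lub1-1b}, observe that $v_n=\zeta_j$ exactly on the set where $\chi_n(x_N/\eps_n)=1$ and $\theta(k_n\eta_n\cdot x)$ takes the value associated to $\zeta_j$; since these two oscillations live essentially in the $x_N$- and $x_1$-directions at scales $\eps_n$ and $1/k_n$ respectively, a routine Fubini-plus-equidistribution argument, combined with the diagonal choice of $k_n$, yields $|\{v_n=\zeta_j\}\cap U|\to\gamma_j|I||U|$.

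The hard part of the construction is the tilt of $\eta_n$: for the naive choice $\eta_n=(1,0,\ldots,0)$ the ``body'' contribution to $\div_{\eps_n}v_{k,n}$ would be of order $1/\eps_n$ and could not be absorbed in $W^{-1,p}$. Taking $\eta_n^N=-\eps_n\mu^1/\mu^N$ kills that term exactly, and although the remaining ``boundary'' term $F_n\,\theta(k\eta_n\cdot x)$ still blows up pointwise like $\eps_n^{-2}$, its mean-zero oscillating factor together with the compactness of $L^p\hookrightarrow W^{-1,p}$ for fixed $n$ makes it arbitrarily small in $W^{-1,p}$ once $k=k_n$ is chosen sufficiently large.
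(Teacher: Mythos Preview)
Your argument is correct and follows essentially the same strategy as the paper's proof: build a two-value laminate $w_k$ (your $\theta\mu$) oscillating in a direction that lies in the symbol kernel of $\div_{\eps_n}$, multiply by a smooth $1$-periodic cutoff in $x_N/\eps_n$ supported in $I^{[\eps_n]}$, observe that the only surviving contribution to $\div_{\eps_n}$ is the cutoff-derivative term, and then diagonalize $k=k(n)$ using weak $L^p$-convergence plus the compact embedding into $W^{-1,p}$. The only cosmetic difference is the choice of oscillation direction: the paper fixes a unit vector $\zeta_{12}^\perp\perp(\zeta_1-\zeta_2)$ and rescales anisotropically via $(\tfrac{1}{\eps_n^2}x',\tfrac{1}{\eps_n}x_N)\cdot\zeta_{12}^\perp$, whereas you take $\eta_n=(1,0,\ldots,0,-\eps_n\mu^1/\mu^N)$ and argue directly with the symbol kernel condition $\mu'\cdot\eta_n'+\tfrac{1}{\eps_n}\mu^N\eta_n^N=0$; both choices satisfy the same algebraic constraint and the rest of the proof is identical.
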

\begin{rem}
The assumption $\zeta_1^N\neq \zeta_2^N$ is actually obsolete.
The case of equality is only excluded above because it is much simpler
and will be treated separately in Proposition~\ref{prop:ub2} below.
\end{rem}
\begin{proof}[Proof of Proposition~\ref{prop:ub1}]
For each $n\in \NN$ fix a function $\varphi_n\in C_c^\infty(\RR;[0,1])$
such that $\varphi_n=1$ on $I^{[2\eps_n]}$ and $\varphi_n=0$ on $\RR\setminus I^{[\eps_n]}$, and define
$$
	\psi_n\in C^\infty(\RR;[0,1]),~~
	\psi_n:=\sum_{z\in \ZZ}\varphi_n(\cdot+z).
$$
Furthermore, for $k\in\NN$ let
$$
	 w_k(t):=\left\{
	\begin{alignedat}[c]{2}
	 	&\zeta_1~~&&\text{if $0<t\leq \gamma_1\tfrac{1}{k}$},\\
	 	&\zeta_2~~&&\text{if $-\gamma_2\tfrac{1}{k}<t\leq 0$,}
	\end{alignedat}\right.
$$
extended periodically to a function $w_k:\RR\to \RR^N$ with period $\tfrac{1}{k}$.
Note that 
\begin{align}\label{lub1-3}
	w_k\rightharpoonup \gamma_1\zeta_1+\gamma_2\zeta_2=0
	~~\text{weakly in $L^p_\loc(\RR;\RR^N)$.}
\end{align}	
With a fixed unit vector
$\zeta_{12}^\perp\in \RR^N$ perpendicular to $\zeta_1-\zeta_2$,
we define $v_{k,n}\in L^\infty(\RR^N;\RR^N)$ by 
$$
	v_{k,n}(x):=
	\psi_n(\tfrac{1}{\eps_n}x_N)
	w_k\big((\tfrac{1}{\eps_n^2}x',\tfrac{1}{\eps_n}x_N)\cdot \zeta_{12}^\perp\big)
$$
Observe that although $x\mapsto w_k\big((\tfrac{1}{\eps_n^2}x',\tfrac{1}{\eps_n}x_N)\cdot \zeta_{12}^\perp\big)$ is not continuous, it is $\div_{\eps_n}$-free (as a distribution), and thus $\div_{\eps_n} v_{k,n}$ is actually a function with 
$$
	\div_{\eps_n} v_{k,n}(x)=
	\eps_n^{-2}\dot{\psi}_n(\tfrac{2}{\eps_n}x_N)
	w_k^N\big((\tfrac{1}{\eps_n^2}x',\tfrac{1}{\eps_n}x_N)\cdot \zeta_{12}^\perp\big).
$$
In particular, as $k\to \infty$ for fixed $n$,
$\div_{\eps_n} v_{k,n}\rightharpoonup 0$ weakly in $L^p(\Omega)$ due to \eqref{lub1-3}, and thus $\div_{\eps_n} v_{k,n}\to 0$
strongly in $W^{-1,p}(\Omega)$, by compact embedding.
Analogously, we get that $v_{k,n}\to 0$ in in $W^{-1,p}(\Omega;\RR^N)$ as $k\to \infty$.
Hence, we may choose $k=k(n)$ with $k(n)\to \infty$ as $n\to \infty$ fast enough such that \eqref{lub1-2} holds for $v_n:=v_{k(n),n}$,
and \eqref{lub1-0}, \eqref{lub1-1} and \eqref{lub1-1b} hold by construction.
\end{proof}
Carathéodory's theorem requires convex combination of up to $N+1$ points,
but Proposition~\ref{prop:ub1} only admits two points.
The following elementary lemma allows us to handle general convex combinations by breaking them into suitable pairs of two. Essentially, it states that if $\xi=\sum_j \theta_j \xi_j$ is a convex combination with $\xi\in H$, where $H$ is an affine hyperplane,
then $\xi$ can be rewritten as a convex combination of points $\bar{\xi}_{ij}\in H$, such that each $\bar{\xi}_{ij}$ is a convex combination of two of the original points, i.e., $\bar{\xi}_{ij}=\beta_{ij}\xi_j+\beta_{ji}\xi_i$:
\begin{lem}\label{lem:convexa}
Let $m\leq N$ and let
$\xi_j\in \RR^N$, $\theta_j\in (0,1]$ for $j=0,\ldots,m$ such that $\sum_{j=0}^m\theta_j=1$ and the vectors $\xi_j-\xi_0$, $j=1,\ldots,m$, are linearly independent.
Then there exists numbers $\alpha_{ij}\in [0,1]$, $i,j\in \{0,\ldots,m\}$, such that
\begin{equation}\label{lca-1a}
	\alpha_{ij}=\alpha_{ji},~~\alpha_{ij}=0~\text{whenever}~\beta_{ij}=0,~~
	\sum_{j=0}^m\sum_{i=0}^{j} \alpha_{ij}=1,
\end{equation}
\begin{equation}
	\label{lca-1b}
	\theta_j=\sum_{i=0}^m \alpha_{ij}\beta_{ij},
\end{equation}
and
\begin{equation}\label{lca-2}
	\xi=\frac{1}{2}\sum_{i,j=0}^m \alpha_{ij}\big(\beta_{ij}\xi_j+\beta_{ji}\xi_i\big)
	=\sum_{i<j}\alpha_{ij}\big(\beta_{ij}\xi_j+\beta_{ji}\xi_i\big)
	+\sum_{j} \alpha_{jj}\beta_{jj}\xi_j.
\end{equation}
where
\begin{equation*}
	\beta_{ij}:=
	\left\{\begin{array}{cl}
		\frac{\xi_i^N-\xi^N}{\xi_i^N-\xi_j^N}
		&\text{if}~(\xi_i^N-\xi^N)(\xi_j^N-\xi^N)<0,\\
		1 &\text{if}~i=j~\text{and}~\xi_j^N=\xi^N,\\
		0 &\text{else}.
	\end{array}\right.
\end{equation*}
Here, note that $\beta_{ij}\in [0,1]$ and $\beta_{ij}+\beta_{ji}=1$ if $(\xi_i^N-\xi^N)(\xi_j^N-\xi^N)<0$.
\end{lem}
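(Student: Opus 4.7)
The plan is to first use the piecewise definition of $\beta_{ij}$ to reduce the lemma to a transportation-type problem on pairs of indices lying on opposite sides of the hyperplane $\{x^N = \xi^N\}$, and then to solve that problem by an iterative greedy scheme backed by a conservation law.

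Partition the indices $\{0,\ldots,m\}$ into $P := \{j : \xi_j^N > \xi^N\}$, $M := \{j : \xi_j^N < \xi^N\}$, and $Z := \{j : \xi_j^N = \xi^N\}$. A glance at the definition of $\beta_{ij}$ shows that $\beta_{ij}$ vanishes except when $i = j \in Z$ (where $\beta_{ii} = 1$) or when $(i,j)$ is a cross pair with one index in $P$ and the other in $M$ (where a direct computation gives $\beta_{ij}, \beta_{ji} \in [0,1]$ with $\beta_{ij} + \beta_{ji} = 1$). Combined with the support constraint in \eqref{lca-1a}, equation \eqref{lca-1b} then forces $\alpha_{jj} = \theta_j$ for $j \in Z$ and leaves only the symmetric block $(\alpha_{pm})_{p\in P,\,m\in M}$ to be determined. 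A short calculation that uses symmetry of $\alpha$, the cross-pair identity $\beta_{ij} + \beta_{ji} = 1$, and the hypothesis $\xi = \sum_j \theta_j\xi_j$ shows that \eqref{lca-2} is automatic from \eqref{lca-1b} (after renaming $i \leftrightarrow j$ in half of the double sum), and that summing \eqref{lca-1b} over $j$ together with $\sum_j \theta_j = 1$ yields $\sum_{i\leq j}\alpha_{ij} = 1$, which in turn forces $\alpha_{ij} \leq 1$. Writing $a_i := \xi^N - \xi_i^N > 0$ for $i \in M$ and $b_j := \xi_j^N - \xi^N > 0$ for $j \in P$, the whole statement is thus reduced to producing a symmetric nonnegative array $(\alpha_{pm})$ that satisfies $\sum_{m \in M}\alpha_{pm}\frac{a_m}{a_m+b_p} = \theta_p$ for every $p \in P$ together with its symmetric counterpart for every $m \in M$.

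The crucial input is a conservation identity obtained by taking the $N$-th component of $\xi = \sum_j \theta_j\xi_j$ and using $\sum_j\theta_j = 1$, namely $\sum_{p\in P}\theta_p b_p = \sum_{m\in M}\theta_m a_m$. I would then construct $\alpha_{pm}$ by a transportation iteration: while there exist $p \in P$ and $m \in M$ with residuals $\tilde\theta_p, \tilde\theta_m > 0$, pick such a pair, set $\alpha_{pm} := \min\big(\tilde\theta_p\frac{a_m+b_p}{a_m},\, \tilde\theta_m\frac{a_m+b_p}{b_p}\big)$, and decrement $\tilde\theta_p$ by $\alpha_{pm}\frac{a_m}{a_m+b_p}$ and $\tilde\theta_m$ by $\alpha_{pm}\frac{b_p}{a_m+b_p}$. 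Each step zeros out at least one residual, and a direct check shows that these two decrements contribute equally to the two sides of the invariant $\sum_P \tilde\theta_p b_p = \sum_M \tilde\theta_m a_m$, so this balance (initially zero) is preserved throughout; hence both sides reach zero simultaneously and the procedure terminates with all row and column sums correct. The bound $\alpha_{pm} \leq 1$ comes for free, since the chosen minimum is at most $\tilde\theta_p + \tilde\theta_m \leq \theta_p + \theta_m \leq \sum_{P\cup M}\theta_j \leq 1$.

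I expect the main obstacle to be purely bookkeeping: carefully distinguishing ordered from unordered index pairs and checking that the piecewise definition of $\beta_{ij}$ meshes consistently with the symmetry convention on $\alpha_{ij}$. The linear independence of the vectors $\xi_j - \xi_0$ plays no role in this lemma and is only inherited from the context of Lemma~\ref{lem:Car}.
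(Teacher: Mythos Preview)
Your proposal is correct and takes a genuinely different route from the paper. The paper argues geometrically: it observes that $\xi$ lies in the polytope $S := \co\{\xi_0,\ldots,\xi_m\} \cap H$ with $H = \{y^N = \xi^N\}$, and that the extreme points of $S$ are precisely the points $\bar\xi_{ij} = \beta_{ij}\xi_j + \beta_{ji}\xi_i$ for cross pairs, together with those $\xi_j$ already in $H$. Writing $\xi$ as a convex combination of these extreme points yields \eqref{lca-2} first, and then \eqref{lca-1b} is obtained by comparing coefficients, invoking uniqueness of barycentric coordinates in a simplex --- which is exactly where the linear-independence hypothesis is used.

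You reverse the logic: you build $\alpha$ directly so that \eqref{lca-1b} holds, via a greedy transportation scheme on $P\times M$ driven by the balance $\sum_{P}\theta_p b_p = \sum_{M}\theta_m a_m$, and then read off \eqref{lca-2} and the normalization $\sum_{i\leq j}\alpha_{ij}=1$ as formal consequences of \eqref{lca-1b}, symmetry of $\alpha$, and $\xi=\sum_j\theta_j\xi_j$. Your observation that affine independence plays no role is therefore accurate for your argument; the paper's proof, by contrast, genuinely relies on the simplex structure to identify the extreme points of $S$ with edge--hyperplane intersections. The trade-off is that the paper's proof is short and conceptual, while yours is constructive, slightly more bookkeeping-heavy, and marginally more general. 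Your closing caveat about tracking ordered versus unordered pairs is apt: that, together with checking that no pair $(p,m)$ is assigned twice (which follows since each step zeroes a residual), is the only place where care is needed.
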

\begin{proof}
Let $H:=\{y\in \RR^N\mid y^N=\xi^N\}$.
Since $\xi\in S:=\co \{\xi_j\mid j=0,\ldots,m\}\cap H$ (where $\co A$ denotes the convex hull of a set $A$), which is a convex polyhedral set, $\xi$ can be written as a convex combination of the extreme points of
$S$. Such an extreme point is either given by $\xi_j$ for some $j$ such that $\xi_j^N=\xi^N$, or it is the intersection of $H$ with a line segment of the form $\co\{\xi_i,\xi_j\}$,
for indices $i,j$ such that $\xi_i$ and $\xi_j$ lie on opposite sides of $H$ (i.e., $(\xi_i^N-\xi^N)(\xi_j^N-\xi^N)<0$). Note that $\co\{\xi_j,\xi_i\}\cap H=\{\beta_{ij}\xi_j+\beta_{ji}\xi_i\}$ in this case. 
Hence, there exist $\alpha_{ij}\in [0,1]$ such that $\alpha_{ij}=\alpha_{ji}$, $\alpha_{ij}=0$ if $\beta_{ij}=0$, $\textstyle{\sum_{i\leq j} \alpha_{ij}=1}$ and
\eqref{lca-2} holds. Moreover, since $\alpha_{ij}=\alpha_{ji}$, we have that
$$
	\xi=\frac{1}{2}\sum_{i,j=0}^m \alpha_{ij}\big(\beta_{ij}\xi_j+\beta_{ji}\xi_i\big)
	=\sum_{j=0}^m \Big(\sum_{i=0}^m \alpha_{ij}\beta_{ij}\Big)\xi_j.
$$
This is another way of expressing $\xi$ as a convex combination of the points $\xi_j$. Since $\xi_j-\xi_0$, $j=1,\ldots,N$, are linearly independent, the coefficients of the convex combination are uniquely determined, and comparison yields \eqref{lca-1b}.
\end{proof}
Combining multiple instances of Proposition~\ref{prop:ub1} with Lemma~\ref{lem:convexa}, we obtain
\begin{prop}\label{prop:ub2}
Let $N\geq 2$, 
let $1\leq p<\infty$, let $J\subset (0,1)$ be an open interval and let $\eps_n\to 0^+$. Moreover, let $m\leq N$, let
$\xi_j\in \RR^N$ and $\theta_j\in (0,1]$, $j=0,\ldots,m$, be such that 
$$
	\textstyle{\sum_j} \theta_j \xi_j=0,~~\sum_{j}\theta_j=1,
$$
and the vectors $\xi_j-\xi_0$, $j=1,\ldots,m$, are linearly independent.
Then for every sequence $\sigma_n\to 0^+$,
there exist sequences $(y_n),(z_n)\subset L^\infty(\RR^N;\RR^N)$
such that 
\begin{equation}\label{lub2-0a}
	\norm{y_n}_{L^\infty}\leq \textstyle{\max_{j}} \abs{\xi_j}~~\text{and}~~
	\norm{z_n}_{L^\infty}\leq \textstyle{\max_{j}} \abs{\xi_j},
\end{equation}
\begin{equation}\label{lub2-0}
	\partial_N y_n^N=\div' y_n'=0~~\text{on $\RR^N$,}
\end{equation}
\begin{equation}\label{lub2-2}
	\norm{\div_{\eps_n} z_n}_{W^{-1,p}(\Omega)}
	+\bignorm{\big(z_n',\tfrac{1}{\eps_n}z^N_n\big)
	}_{W^{-1,p}(\Omega;\RR^N)}
	\leq \sigma_n
\end{equation}
for every $n\in\NN$,
\begin{equation}\label{lub2-1}
\begin{aligned}
	&
	y_n\rightharpoonup 0,~~z_n \rightharpoonup 0~~\text{in $L^p(\omega \times J;\RR^N)$ as $\eps\to 0^+$}0,\\	
	&\supp(y_n)\cup \supp(z_n) \subset \RR^{N-1}\times K_n
	~~\text{for a compact set $K_n\subset J$}
\end{aligned}
\end{equation}
and
\begin{equation}\label{lub2-1b}
	\measN{\{y_n+z_n=\xi_j\}\cap U}\underset{\eps\to 0^+}{\To} 
	\theta_j \measN{U},
	~~\text{for every measurable $U\subset \RR^{N-1}\times J$}
\end{equation}
and every $j\in \{0,\ldots,m\}$. 
\end{prop}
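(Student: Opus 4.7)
The plan is to decompose the $(m+1)$-point convex combination $0=\sum_j\theta_j\xi_j$ into pair-wise contributions by applying Lemma~\ref{lem:convexa}, handle each off-diagonal pair via Proposition~\ref{prop:ub1}, and collect the mean offsets $\bar\xi_{ij}$ together with the diagonal points $\xi_j$ (those with $\xi_j^N=0$) into $y_n$. The key structural fact is that every term absorbed into $y_n$ lies in the hyperplane $H:=\{y\in\RR^N:y^N=0\}$, so $y_n^N\equiv0$ automatically, and the constraints $\partial_N y_n^N=0$ and $\div' y_n'=0$ can both be achieved by taking $y_n$ to depend only on $x_N$.

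First I apply Lemma~\ref{lem:convexa} to $\sum_j\theta_j\xi_j=0$ to obtain coefficients $\alpha_{ij},\beta_{ij}\in[0,1]$ such that $\sum_{i\leq j}\alpha_{ij}\bar\xi_{ij}=0$ with $\bar\xi_{ij}\in H$ whenever $\alpha_{ij}>0$, and $\theta_j=\sum_i\alpha_{ij}\beta_{ij}$. Next I introduce a refinement scale $\delta_n\to 0^+$ with $\delta_n/\eps_n\to\infty$ and partition $J$ periodically with period $\delta_n$ into sub-intervals labelled by pairs $(i,j)$, so that the total length labelled $(i,j)$ equals $\alpha_{ij}|J|$ up to a negligible boundary error. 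On each off-diagonal piece (labelled $(i,j)$ with $i<j$, so $\xi_i^N\neq\xi_j^N$) I invoke Proposition~\ref{prop:ub1}, after rescaling the piece onto a reference interval in $(0,1)$, with data $\zeta_1:=\xi_i-\bar\xi_{ij}$, $\zeta_2:=\xi_j-\bar\xi_{ij}$, $\gamma_1:=\beta_{ji}$, $\gamma_2:=\beta_{ij}$; these satisfy $\gamma_1\zeta_1+\gamma_2\zeta_2=0$ and $\zeta_1^N\neq\zeta_2^N$ as required. I then set $z_n$ to be the sum of the resulting $v_n^{(ij)}$ over all off-diagonal pieces (their supports are disjoint in $x_N$), and $y_n$ to be the piecewise constant function equal to $\bar\xi_{ij}$ on each off-diagonal piece and to $\xi_j$ on each diagonal piece.

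Verification of the conditions is mostly direct. The constraints $\partial_N y_n^N=0$ and $\div' y_n'=0$ follow because $y_n$ is $H$-valued and depends only on $x_N$. The $L^\infty$-bound \eqref{lub2-0a} is inherited from \eqref{lub1-0} together with the triangle inequality; \eqref{lub1-1} yields $z_n\rightharpoonup 0$; condition \eqref{lub2-1b} is obtained by combining \eqref{lub1-1b} with the labelling proportions $\alpha_{ij}$ and the identity $\theta_j=\sum_i\alpha_{ij}\beta_{ij}$; and the support condition \eqref{lub2-1} holds by construction, with $K_n$ the closure of the union of all the pieces.

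The main obstacle is two-fold. First, securing $y_n\rightharpoonup 0$: using a single sub-interval per pair would leave $y_n$ constant in $n$ with a non-zero strong limit, so the refinement to period $\delta_n\to 0$ is essential---then $y_n$ is a step function in $x_N$ with period $\delta_n$ whose average $\sum_{i\leq j}\alpha_{ij}\bar\xi_{ij}$ equals $0$, forcing weak convergence to zero. Second, controlling the $W^{-1,p}$ bound \eqref{lub2-2} summed over $O(|J|/\delta_n)$ pieces: this requires picking $\tau_n$ in each invocation of Proposition~\ref{prop:ub1} of size at most $\sigma_n\delta_n/|J|$ and letting the internal oscillation parameter $k(n)$ grow fast enough that the accumulated error stays below $\sigma_n$. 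The condition $\delta_n\gg\eps_n$ is what permits this: since Proposition~\ref{prop:ub1}'s $\eps_n$-periodic construction fits many full periods inside each $\delta_n$-piece, the $1/\eps_n$ blow-up that would come from a naive cutoff in $x_N$ across the interfaces is avoided, and jumps of $y_n$ across adjacent pieces contribute only to $\partial_N y_n^N$ applied to the identically vanishing $y_n^N$.
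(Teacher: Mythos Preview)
Your high-level strategy---reduce to pairs via Lemma~\ref{lem:convexa}, put the hyperplane points $\bar\xi_{ij}$ into a piecewise-constant $y_n$ depending only on $x_N$ (so $y_n^N\equiv 0$ and $\div' y_n'=0$), and build $z_n$ from Proposition~\ref{prop:ub1} applied to $(\xi_i-\bar\xi_{ij},\xi_j-\bar\xi_{ij};\beta_{ji},\beta_{ij})$---is exactly the paper's. The implementation differs, and your version has a gap at the ``rescaling'' step: rescaling $x_N$ by a factor $\delta_n$ turns $\div_{\eps_n}$ into $\div_{\eps_n\delta_n}$, so a black-box application of Proposition~\ref{prop:ub1} in the rescaled variable does not deliver \eqref{lub2-2} for $\div_{\eps_n}$. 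You could repair this by reopening the proof of Proposition~\ref{prop:ub1} and replacing its $\eps_n$-periodic cutoff $\psi_n(\tfrac{1}{\eps_n}x_N)$ by a $\delta_n$-periodic one while keeping the $(\tfrac{1}{\eps_n^2},\tfrac{1}{\eps_n})$ scaling in the laminate $w_k$ (that scaling is what makes the bulk term $\div_{\eps_n}$-free), but that is no longer a citation of the proposition, and you still face a sum of $O(|J|/\delta_n)\to\infty$ many $W^{-1,p}$ errors.

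The paper avoids the intermediate scale $\delta_n$ entirely by exploiting the $\eps_n$-periodic structure already built into the output of Proposition~\ref{prop:ub1}. One partitions the unit interval $(0,1)$---not $J$---into \emph{fixed}, $n$-independent sub-intervals $I_{ij}$ with $|I_{ij}|=\alpha_{ij}$, invokes Proposition~\ref{prop:ub1} once per off-diagonal pair with $I=I_{ij}$ and $\tau_n$ a small multiple of $\sigma_n$, and sets $\tilde y_n:=\sum_{i\le j}\chi_{T_{ij}(\eps_n)}\bar\xi_{ij}$ with $T_{ij}(\eps_n):=\RR^{N-1}\times\bigcup_{k\in\ZZ}(\eps_n k+\eps_n I_{ij})$. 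Since the $I_{ij}$ are disjoint, the $z_{ij,n}$ have disjoint supports by \eqref{lub1-1}, so $\tilde z_n=\sum_{i<j} z_{ij,n}$ is a sum of at most $(m+1)(m+2)/2$ terms and \eqref{lub2-2} follows from a single finite triangle inequality; the weak convergence $\tilde y_n\rightharpoonup\sum_{i\le j}\alpha_{ij}\bar\xi_{ij}=0$ comes from the $\eps_n$-periodicity itself, with no extra scale needed. At the end one simply restricts to the union $K_n$ of full $\eps_n$-periods contained in $J$, which is harmless because $\tilde z_n$ and $\tilde y_n^N$ already vanish near the boundary of each $\eps_n$-cell.
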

\begin{proof}
Let $\alpha_{ij}$ and $\beta_{ij}$ be as in Lemma~\ref{lem:convexa}, and divide the unit interval $(0,1)$ into pairwise disjoint open subintervals $I_{ij}$, $0\leq i\leq j\leq m$ (some possibly empty), such that $\abs{I_{ij}}=\alpha_{ij}$.
For $\eps>0$ let
\begin{align*}
	&T_{ij}(\eps):=\RR^{N-1}\times \bigcup_{k\in \ZZ} (\eps k+\eps I_{ij})
	,\qquad
	\bar{\xi}_{ij}:=\left\{\begin{array}{ll}
		\beta_{ji}\xi_i+\beta_{ij}\xi_j & \text{if $i\neq j$,}\\
		\xi_j & \text{if $i=j$.}
	\end{array}\right.
\end{align*}
For $i\leq j$, we define bounded sequence $(y_{ij,n})_n, (z_{ij,n})_n\subset L^\infty(\RR^N;\RR^N)$ as follows:
$$
	y_{ij,n}:=\chi_{T_{ij}(\eps_n)}\bar{\xi}_{ij},
$$
where $\chi_{T_{ij}(\eps_n)}$ denotes the characteristic function of the set $T_{ij}(\eps_n)$. For every $j$, we set 
$z_{jj,n}:=0$.
For $i<j$, let $z_{ij,n}$ be the sequence obtained in
Proposition~\ref{prop:ub1}, applied with $I:=I_{ij}$, $\tau_n:=\frac{1}{(m+1)(m+2)}\eps_n\sigma_n$,
$\zeta_1:=\xi_i-\bar{\xi}_{ij}$, $\zeta_2:=\xi_j-\bar{\xi}_{ij}$, 
$\gamma_1:=\beta_{ji}$ and $\gamma_2:=\beta_{ij}=1-\beta_{ji}$.
In particular, Proposition~\ref{prop:ub1} gives that
\begin{align}\label{lub2-3}
\begin{aligned}[c]
	&\measN{\{z_{ij,n}=\xi_i-\bar{\xi}_{ij}\}\cap U}\underset{n\to\infty}{\To} 
		\beta_{ji} \abs{I_{ij}} \measN{U}=\beta_{ji} \alpha_{ij} \measN{U},\\
	&\measN{\{z_{ij,n}=\xi_j-\bar{\xi}_{ij}\}\cap U}\underset{n\to\infty}{\To} 
		\beta_{ij} \abs{I_{ij}} \measN{U}=\beta_{ij} \alpha_{ij} \measN{U},\\
\end{aligned}
\end{align}
for every measurable $U\subset \RR^N$, and
\begin{align}\label{lub2-3b}
\begin{aligned}[c]
	&\supp(z_{ij,n})\subset \RR^{N-1}\times \textstyle{\bigcup_{k\in\ZZ}}
	\big(\eps_n k+\eps_n I_{ij}^{[\eps_n]} \big)
	~~\text{with a compact $I_{ij}^{[\eps_n]}\subset I_{ij}$}
\end{aligned}
\end{align}
for every $i\leq j$ (for $i=j$, \eqref{lub2-3} and \eqref{lub2-3b} are trivial).
In addition,
\begin{align}\label{lub2-4}
	\norm{\div_{\eps_n} z_{ij,n}}_{W^{-1,p}(\Omega)}
	+\norm{\big(z'_{ij,n},\tfrac{1}{\eps_n}v^N_{ij,n}\big)
	}_{W^{-1,p}(\Omega;\RR^N)}
	\leq \tfrac{1}{(m+1)(m+2)}\eps_n \sigma_n
\end{align}
for every $n$ and every $i\leq j$. 
Now let
$$
	\tilde{z}_n(x):=\sum_{j=0}^m\sum_{i=0}^{j}
	z_{ij,n}(x)
	~~\text{and}~~
	\tilde{y}_n(x):=\sum_{j=0}^m\sum_{i=0}^{j}
	y_{ij,n}(x)~~\text{for $x\in\RR^N$.}
$$ 
Note that at any given $x$, at most one term contributes in each of the double sums above; more precisely,
$\tilde{z}_n=z_{ij,n}$ and $\tilde{y}_n=y_{ij,n}$ on $T_{ij}(\eps_n)$. Moreover,
$$
	\tilde{y}_n \underset{\eps\to 0^+}{\rightharpoonup} 
	\sum_{j=0}^m\sum_{i=0}^{j}\measN{I_{ij}} \bar{\xi}_{ij}
	=\sum_{i<j} \alpha_{ij}(\beta_{ij}\xi_j+\beta_{ji}\xi_i)
	+\sum_{j}\alpha_{jj}\beta_{jj}\xi_j
	=0
$$
weakly in $L^p(\Omega;\RR^N)$,
and 
$$
	\partial_N \tilde{y}_n^N=\div' \tilde{y}_n'=0~~\text{on}~\RR^N
$$
since $\tilde{y}_n(\cdot,x_N)$ is constant for every $x_N\in\RR$ and $\tilde{y}_n^N=\bar{\xi}_{ij}^N=0$ a.e..
By \eqref{lub2-4}, we obtain that
\begin{align}\label{lub2-5}
\begin{aligned}
	&\bignorm{\div_{\eps_n} \tilde{z}_n}_{W^{-1,p}(\Omega)}
	+\bignorm{\big(\tilde{z}'_n,\tfrac{1}{\eps_n}\tilde{z}^N_n\big)
	}_{W^{-1,p}(\Omega;\RR^N)}\\
	&\qquad\qquad
	\leq \textstyle{\sum_{j=0}^m \sum_{i=0}^{j}}\tfrac{1}{(m+1)(m+2)}
	(\eps_n+1) \sigma_n=\frac{\eps_n+1}{2}\sigma_n
	\leq \sigma_n
\end{aligned}
\end{align}
for $n\in\NN$.
By \eqref{lub2-3}, we get that
\begin{align}\label{lub2-6}
\begin{aligned}[c]
	&\measN{\{\tilde{y}_n+\tilde{z}_n=\xi_j\}\cap U}\underset{n\to\infty}{\To} 
		\Big(\abs{I_{jj}}+\sum_{i\neq j}\beta_{ij} \alpha_{ij}\Big) 
		\measN{U}=\theta_j\measN{U}
\end{aligned}
\end{align}
for every $j$ and every measurable $U\subset \RR^N$,
where the latter equality is due to \eqref{lca-1b} combined with the fact that 
$\bigabs{I_{jj}^{[\eps_n]}}=\alpha_{jj}=\beta_{jj}\alpha_{jj}$.
Finally, define
$$
	z_n:=\chi_{\RR^{N-1}\times K_n} \tilde{z}_n
	~~\text{and}~~
	y_n:=\chi_{\RR^{N-1}\times K_n} \tilde{y}_n
$$
where
$$
	K_n:=\bigcup_{k\in Z_n(J)} \big(\eps_n k+\eps_n [0,1]\big)
	~~\text{and}~~
	Z_n(J):=\mysetl{k\in\ZZ}{\eps_n k+\eps_n [0,1]\subset J}.
$$
Clearly, \eqref{lub2-0a}, \eqref{lub2-1} and \eqref{lub2-1b} are satisfied, the latter as a consequence of \eqref{lub2-6}.
In addition, 
\begin{align*}
	z_n=0~~\text{and}~~y_n^N=\xi^N=0~~\text{in a vicinity of $\RR^{N-1}\times \partial K_n$},
\end{align*}
the former by \eqref{lub2-3b}.
Consequently, $\partial_N y_n^N=\partial_N \tilde{y}_n^N=0$ and $\div' y_n'=\div' \tilde{y}_n'=0$
on $\RR^N$, and \eqref{lub2-5} implies \eqref{lub2-2}.
\end{proof}
The next result essentially yields the upper bound in the piecewise constant case.
\begin{prop}\label{prop:ubpwc}
Let $f_{\#}$ be a function satisfying \eqref{f0}--\eqref{f2} and let $u_{\#}\in \cU_0$. Moreover,
let $J_k\subset (0,1)$ be a finite number of pairwise disjoint open intervals covering $(0,1)$ up to a set of measure zero, let $\omega_h\subset \omega$ be a finite number of open, pairwise disjoint sets covering $\omega$ up to a set of measure zero, and suppose that for each $(h,k)$ and each $\mu\in\RR^N$,
$$
	\text{$u_{\#}$ and $f_{\#}(\cdot,\mu)$ are constant on $Q_{h,k}$, where}~Q_{h,k}:=\omega_h\times J_k.
$$
Then for every pair of sequences $\eps_n\to 0^+$ and $\tau_n\to 0^+$, 
there exist two sequences 
$(v_n),(w_n)\subset L^\infty(\Omega;\RR^N)$
such that $v_n\rightharpoonup 0$ and $w_n \rightharpoonup 0$ in $L^p(\Omega;\RR^N)$,
\begin{equation}\label{lubpw-c1}
	\abs{v_n(x)}\leq K \big(\abs{u_{\#}(x)}+1\big)~~\text{and}~~
	\abs{w_n(x)}\leq K \big(\abs{u_{\#}(x)}+1\big)~~\text{for a.e.~$x\in \Omega$},
\end{equation}
where $K$ is a constant that only depends on the constants in \eqref{f1} and \eqref{f2},
\begin{equation}\label{lubpw-c2}
	\div_{\eps_n}v_n=0~~\text{on $\RR^N$},
\end{equation}
\begin{equation}\label{lubpw-c3}
	\norm{\div_{\eps_n} w_n}_{W^{-1,p}(\Omega)}
	+\bignorm{\big(w_n',\tfrac{1}{\eps_n}w^N_n\big)
	}_{W^{-1,p}(\Omega;\RR^N)}
	\leq \tau_n
\end{equation}
for every $n\in\NN$, and
\begin{equation}\label{pubpwc-1}
	\lim_{n\to\infty} \int_{\Omega} f_{\#}(x,u+v_n+w_n)\,dx =
	\int_{\Omega} f_{\#}^{**}(x,u)\,dx,
\end{equation} 
where for every $x$, $f_{\#}^{**}(x,\cdot)$ denotes the convex envelope of $f_{\#}(x,\cdot)$.
\end{prop}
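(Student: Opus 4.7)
The plan is to apply Proposition~\ref{prop:ub2} separately on each cell $Q_{h,k}$ and then glue the pieces in the $x'$-direction via a smooth partition of unity, absorbing the resulting divergence defect into a small, \emph{cell-local} correction in the $N$-component of $v_n$. On each $Q_{h,k}$, write $\xi^{(h,k)}:=u_\#|_{Q_{h,k}}$ and $f^{(h,k)}(\cdot):=f_\#(x,\cdot)|_{Q_{h,k}}$. Lemma~\ref{lem:Car2} applied at $\xi^{(h,k)}$ yields points $\xi_0^{(h,k)},\ldots,\xi_{m_{h,k}}^{(h,k)}\in\RR^N$ and weights $\theta_j^{(h,k)}\in(0,1]$ satisfying the hypotheses of Proposition~\ref{prop:ub2} (after shifting by $\xi^{(h,k)}$), with $\sum_j\theta_j^{(h,k)}f^{(h,k)}(\xi_j^{(h,k)})=f^{(h,k)**}(\xi^{(h,k)})$ and the pointwise bound $|\xi_j^{(h,k)}|\le K(|\xi^{(h,k)}|+1)$. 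Applying Proposition~\ref{prop:ub2} on $J=J_k$ to the shifted points with a small tolerance $\sigma_n^{(h,k)}>0$, still to be fixed, produces $y_n^{(h,k)},z_n^{(h,k)}\in L^\infty$. The features I will exploit are: (i) $y_n^{(h,k)}$ depends only on $x_N$, has vanishing $N$-component, and is supported in $\RR^{N-1}\times K_n^{(h,k)}$ with $K_n^{(h,k)}\subset J_k$ a union of full $\eps_n$-periods on each of which $y_n^{(h,k)\prime}$ has mean zero --- in particular $\int_{J_k}y_n^{(h,k)\prime}(t)\,dt=0$; (ii) $z_n^{(h,k)}$ is only nearly $\div_{\eps_n}$-free, at the $W^{-1,p}$-scale of $\sigma_n^{(h,k)}$.

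\textbf{Construction of $v_n$.} Pick cut-offs $\varphi_n^{(h)}\in C_c^\infty(\omega_h;[0,1])$ with $\varphi_n^{(h)}=1$ on $\{\dist{\cdot}{\partial\omega_h}\ge 1/m_n\}$ and $\nnorm{\varphi_n^{(h)}}_{W^{2,\infty}}\le C m_n^2$, where $m_n\to\infty$ is chosen slow enough that $\eps_n m_n\to 0$. Set $A_n(x):=\sum_{h,k}\varphi_n^{(h)}(x')y_n^{(h,k)}(x_N)$ (so $A_n^N\equiv 0$), $v_n':=A_n'$, and for $x_N\in J_k=(a_k,b_k)$ let
\begin{align*}
v_n^N(x',x_N):=-\eps_n\int_{a_k}^{x_N}\div' A_n'(x',t)\,dt.
\end{align*}
The identity $\int_{J_k}y_n^{(h,k)\prime}\,dt=0$ gives $v_n^N(\cdot,b_k)=0=v_n^N(\cdot,a_{k+1})$, so $v_n^N$ is continuous across each interface, $\partial_N v_n^N=-\eps_n\div' A_n'$ holds distributionally on $\Omega$, and hence $\div_{\eps_n}v_n=0$ strictly. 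At any $x\in\omega_h\times J_k$ only the $(h,k)$-summand contributes both to $A_n'(x)$ and to $\nabla'\varphi_n^{(h)}(x')\cdot y_n^{(h,k)\prime}(x_N)$ in $\div' A_n'$, so $|v_n'(x)|\le K(|u_\#(x)|+1)$ and $|v_n^N(x)|\le C\eps_n m_n K(|u_\#(x)|+1)$; both are dominated by $K'(|u_\#(x)|+1)$ once $\eps_n m_n\le 1$. Weak convergence $v_n\rightharpoonup 0$ in $L^p$ follows from $y_n^{(h,k)}\rightharpoonup 0$ combined with $\varphi_n^{(h)}\phi\to\chi_{\omega_h}\phi$ strongly in $L^{p'}$ for every test $\phi$, plus $v_n^N\to 0$ uniformly.

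\textbf{Construction of $w_n$ and energy convergence.} Set $w_n(x):=\sum_{h,k}\varphi_n^{(h)}(x')z_n^{(h,k)}(x)$; the pointwise bound and weak convergence to $0$ are established exactly as for $v_n$. Using the product rule $\div_{\eps_n}(\varphi z)=\varphi\div_{\eps_n}z+\nabla'\varphi\cdot z'$ (valid since $\varphi$ depends only on $x'$) and the multiplier estimate $\nnorm{\varphi a}_{W^{-1,p}}\le C\nnorm{\varphi}_{W^{1,\infty}}\nnorm{a}_{W^{-1,p}}$, the two $W^{-1,p}$-terms in \eqref{lubpw-c3} are bounded by finite sums of $Cm_n^2\sigma_n^{(h,k)}$; choosing the $\sigma_n^{(h,k)}$ small enough relative to $m_n^2$ and the (fixed) number of cells makes these $\le\tau_n$. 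For \eqref{pubpwc-1}, on the set $\{\varphi_n^{(h)}=1\}\cap Q_{h,k}$ we have $v_n^N=0$ and $u_\#+v_n+w_n=\xi^{(h,k)}+y_n^{(h,k)}+z_n^{(h,k)}$, which by \eqref{lub2-1b} takes the value $\xi_j^{(h,k)}$ on an asymptotic fraction $\theta_j^{(h,k)}$; continuity of $f^{(h,k)}$ then yields cell-wise convergence to $|Q_{h,k}|f^{(h,k)**}(\xi^{(h,k)})$, while the boundary strip $\{\varphi_n^{(h)}<1\}\cap Q_{h,k}$, of measure $O(1/m_n)$, contributes $o(1)$ thanks to \eqref{f1} and the pointwise bounds on $v_n,w_n$.

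\textbf{Main obstacle.} The hardest aspect is reconciling strict $\div_{\eps_n}$-freeness of $v_n$ with the \emph{pointwise} bound $|v_n|\le K(|u_\#|+1)$ when the Caratheodory points differ from cell to cell: a sharp indicator localization produces non-vanishing surface distributions on $\cup_h\partial\omega_h$ whose $W^{-1,p}$-norms do not vanish, and a naive global potential correction $v_n^N\sim\eps_n\int_0^{x_N}\cdots$ (\`a la Lemma~\ref{lem:rc1}) would give an $N$-component controlled only by $\nnorm{u_\#}_{L^\infty}$, not by $|u_\#(x)|$. The cell-local integral used above closes \emph{exactly} at each interface thanks to the $\eps_n$-periodic mean-zero structure of $y_n^{(h,k)}$ furnished by Proposition~\ref{prop:ub2}, resolving both issues simultaneously; the two rate conditions $\eps_n m_n\to 0$ and $\sigma_n^{(h,k)}\ll m_n^{-2}$ are then reconcilable by a standard diagonal choice.
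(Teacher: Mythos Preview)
Your argument is correct and runs parallel to the paper's proof: both apply Proposition~\ref{prop:ub2} cell by cell, localize in $x'$ via smooth cut-offs supported in $\omega_h$, and then repair the $\div_{\eps_n}$-defect of the $y$-part by an integral correction in $x_N$. The difference is in how the pointwise bound \eqref{lubpw-c1} for the corrected $v_n^N$ is secured.

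You take a \emph{cell-local} primitive $v_n^N(x',x_N)=-\eps_n\int_{a_k}^{x_N}\div' A_n'(x',t)\,dt$ and close it at each interface $x_N=b_k$ using that $y_n^{(h,k)}$ depends on $x_N$ only and has mean zero over $J_k$. This is clean, but those two facts are properties of the explicit construction inside the proof of Proposition~\ref{prop:ub2} (the $\eps_n$-periodic, piecewise constant $\tilde y_n$), not of its statement; you are effectively reopening that proof.

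The paper keeps Proposition~\ref{prop:ub2} as a black box and instead makes the cut-off rate cell-dependent: it takes $\eta_n\in C_c^\infty(\omega_h;[0,1])$ with $\|\nabla\eta_n\|_{L^\infty}\le \eps_n^{-1/2}\big(K(|u_{h,k}|+1)\big)^{-1}$, so that $\|\div' v_n'\|_{L^\infty}\le \eps_n^{-1/2}$ uniformly in $(h,k)$, and then the \emph{global} correction $\tilde v_n^N(x)=v_n^N(x)-\eps_n\int_0^{x_N}\div' v_n'(x',t)\,dt$ satisfies $\|\tilde v_n-v_n\|_{L^\infty}\le \eps_n^{1/2}\to 0$. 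So the concern you raise in your ``main obstacle'' paragraph --- that a global primitive would only be controlled by $\|u_\#\|_{L^\infty}$ --- is dodged in the paper not by restricting the integration interval but by normalizing the cut-off gradient by $1/(|u_{h,k}|+1)$, making the correction uniformly small rather than merely cell-locally bounded. Both devices work; yours trades the cell-dependent cut-off rate for reliance on the finer periodic structure of $y_n$.
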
%
%
\begin{proof}
{\bf \underline{Step 1}:} We first show the assertion with \eqref{lubpw-c2} replaced by the condition
\begin{equation}\label{lubpw-c2s1}
	\partial_N v^N_n=0~~\text{on $\RR^N$}~~\text{and}~~
	\norm{\div' v_n'}_{L^\infty(\Omega)}\leq (\eps_n)^{-\frac{1}{2}}.
\end{equation}
Clearly, it is enough to define $v_n$ and $w_n$ on each $Q_{h,k}$ and prove the asserted properties with $Q_{h,k}$ instead of $\Omega$, as long as the restriction of $v_n$ and $w_n$ to any one $Q_{h,k}$ has compact support in this set. Hence, we consider $h$ and $k$ to be fixed below. 

Let $(\sigma_n)\subset (0,\infty)$ be a sequence with $\sigma_n\to 0^+$ (fast enough, as specified later), and define
$$
	u_{h,k}:=u_{\#}(x)~~\text{and}~~g_{h,k}(\mu):=f_{\#}(x,\mu+u_{h,k})~~\text{for $x\in Q_{h,k}$ and $\mu\in\RR^N$.}
$$
By Lemma~\ref{lem:Car} and Lemma~\ref{lem:Car2}, $0\in \RR^N$ can be written as a convex combination $0=\sum_{j=0}^m \theta_j \xi_j$ such that
$\xi_j-\xi_0$, $j=1,\ldots,m$, are linearly independent and
\begin{align}\label{pubpwc-3}
	\textstyle{\sum_{j=0}^m} \theta_j f_{\#}(x,\xi_j+u_{\#}(x)) = \textstyle{\sum_{j=0}^m} \theta_j g_{h,k}(\xi_j) = g_{h,k}^{**}(0) = f_{\#}^{**}(x,u_{\#}(x)),
\end{align}
for every $x\in Q_{h,k}$. Moreover, as a consequence of \eqref{lC-2},
\begin{align}
	\textstyle{\max_j} \abs{\xi_j}\leq K \big(\abs{u_{h,k}}+1\big),
\end{align}
with a constant $K$ only depending on the constants in \eqref{f1} and \eqref{f2}.
Proposition~\ref{prop:ub2} applied with $J=J_k$ yields two sequences $(y_n),(z_n)\subset L^\infty(\RR^N;\RR^N)$
such that $y_n\rightharpoonup 0$ and $z_n\rightharpoonup 0$ in $L^p_\loc$,
\begin{equation}\label{pubpwc-4aa}
	\abs{y_n(x)}\leq K \big(\abs{u_{h,k}}+1\big)~~\text{and}~~
	\abs{z_n(x)}\leq K \big(\abs{u_{h,k}}+1\big)~~\text{for $x\in\RR^N$},
\end{equation}
\begin{equation}\label{pubpwc-4a}
	\partial_N y_n^N=\div' y_n'=0~~\text{on $\RR^N$},
\end{equation}
\begin{equation}\label{pubpwc-4}
	\norm{\div_{\eps_n} z_n}_{W^{-1,p}(\Omega)}
	+\bignorm{\big(z_n',\tfrac{1}{\eps_n}z^N_n\big)
	}_{W^{-1,p}(\Omega;\RR^N)}
	\leq \sigma_n,
\end{equation}
\begin{equation}\label{pubpwc-5}
	\text{$y_n$ and $z_n$ vanish in a vicinity of $\RR^{N-1}\times \partial J_k$ (depending on $n$),}
\end{equation}
and 
\begin{equation}\label{pubpwc-6}
	\lim_{n\to\infty} \int_{Q_{h,k}} g_{h,k}(y_n+z_n)\,dx 
	=\bigabs{Q_{h,k}} \sum_{j=0}^m \theta_j g_{h,k}(\xi_j),
\end{equation} 
the latter due to \eqref{lub2-1b} and Lebesgue's theorem. 
Together with \eqref{pubpwc-3}, \eqref{pubpwc-6} yields that
\begin{equation}\label{pubpwc-7}
	\lim_{n\to\infty} \int_{Q_{h,k}} g_{h,k}(y_n+z_n)\,dx =
	\int_{Q_{h,k}} g^{**}(0)\,dx.
\end{equation} 
To obtain functions with compact support in $Q_{h,k}$, 
we have to cut off $y_n$ and $z_n$ near $(\partial \omega_h)\times J_k$.
For this purpose choose a sequence of functions 
$\eta_n \in C_c^\infty(\omega_h;[0,1])$ in such a way that 
$$
	\text{$\eta_n\nearrow 1$ pointwise}~~\text{and}~~
	\norm{\nabla \eta_n}_{L^\infty}\leq (\eps_n)^{-\frac{1}{2}}\frac{1}{K \big(\abs{u_{h,k}}+1\big)}
$$
Below, we identify $\eta_n$ with a function in $C^\infty(\RR^N)$ that is constant in $x_N$.
In particular, we have that
\begin{equation}\label{pubpwc-8}
	(1-\eta_n)y_n\to 0~~\text{and}~~(1-\eta_n)z_n\to 0~~\text{pointwise a.e.~on $Q_{h,k}$}.
\end{equation}
We define
$$
	v_n:=\eta_n y_n~~\text{and}~~w_n:=\eta_n z_n
$$
By construction, these functions have compact support in $Q_{h,k}$, $v_n\rightharpoonup 0$ in $L^p$ and $w_n\rightharpoonup 0$ in $L^p$, 
and \eqref{pubpwc-4aa} entails \eqref{lubpw-c1}. In addition, we have \eqref{lubpw-c2s1}, its second part since by \eqref{pubpwc-4a}, $\div' v_n'=(\nabla' \eta_n)\cdot y'_n=(\nabla \eta_n)\cdot y_n$ and thus
$$
	\norm{\div' v_n'}_{L^\infty}\leq \norm{\nabla \eta_n}_{L^\infty}\norm{y_n}_{L^\infty}\leq (\eps_n)^{-\frac{1}{2}}.
$$
By Lebesgue's theorem, \eqref{pubpwc-7} and \eqref{pubpwc-8} yield \eqref{pubpwc-1} for $Q_{h,k}$ instead of $\Omega$.
Finally,
$$
	\div_{\eps_n} (\eta_n z_n)=(\nabla' \eta_n)\cdot z'_n+\eta_n\div_{\eps_n}z_n,
$$
whence
\begin{align*}
	&\norm{\div_{\eps_n} (\eta_n z_n)}_{W^{-1,p}(Q_{h,k})}
	+\bignorm{\big((\eta_n z_n)',\tfrac{1}{\eps_n}(\eta_n z_n)^N\big)
	}_{W^{-1,p}(Q_{h,k};\RR^N)}\\
	&\qquad \leq \norm{\eta_n}_{W^{2,\infty}(\RR^N)} \Big(\norm{\div_{\eps_n} z_n}_{W^{-1,p}(\Omega)}
	+\bignorm{\big(z_n',\tfrac{1}{\eps_n}z^N_n\big)
	}_{W^{-1,p}(\Omega;\RR^N)}\Big)\\
	&\qquad \leq \norm{\eta_n}_{W^{2,\infty}(\RR^N)} \sigma_n
\end{align*}
by \eqref{pubpwc-4}. With $\sigma_n:=\tau_n (\norm{\eta_n}_{W^{2,\infty}})^{-1}$, this gives
\eqref{lubpw-c3} for $Q_{h,k}$ instead of $\Omega$.

{\bf \underline{Step 2}:}
We still have to modify $v_n$ to obtain \eqref{lubpw-c2} instead of \eqref{lubpw-c2s1}, while maintaining the other asserted properties. For $x\in \RR^N$ let
$$
	\tilde{v}_n(x):=v_n(x)-\eps_n \int_0^{x_N} \div' v_n'(x',t)\,dt
$$
for $x=(x',x_N)\in\Omega$, with $v_n$ as in the first step.
Since $\partial_N v_n=0$, we have $\div_\eps \tilde{v}_n=0$ on $\Omega$ by construction, and due to the second part of \eqref{lubpw-c2s1}, 
$$
	\norm{v_n-\tilde{v}_n}_{L^\infty}\leq (\eps_n)^{\frac{1}{2}}\to 0. 
$$
As a consequence of the latter,
\eqref{lubpw-c1}, \eqref{lubpw-c3} and \eqref{pubpwc-1} also hold for $\tilde{v}_n$ instead of $v_n$ (in case of \eqref{lubpw-c1} with a slightly larger constant). 
\end{proof}
The proof of the upper bound in the general framework relies on approximation and the following well-known property of Carathéodory functions.
\begin{prop}[Scorza-Dragoni, e.g.~see \cite{EkTe76B}] \label{prop:scorzadragoni}
Let $\Omega\subset \RR^N$ be open and bounded and let $f:\Omega\times \RR^N\to \RR$ be a Carathéodory function. Then for every $\delta>0$, there exists a compact set $\tilde{\Omega}\subset \Omega$ such that $\nabs{\Omega \setminus \tilde{\Omega}}<\delta$ and
$f$ is continuous on $\tilde{\Omega} \times \RR^N$.
\end{prop}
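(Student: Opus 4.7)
The plan is to combine Lusin's theorem, applied to a countable dense family of slice functions, with an Egorov-type argument that upgrades continuity of each $f(x,\cdot)$ to a modulus of continuity uniform in $x$ on a large compact subset; joint continuity then follows from a standard $\eps/3$-argument. This is a classical route, but worth sketching for completeness.

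First, fix a countable dense set $\{q_k\}_{k\in\NN}\subset \RR^N$. Each slice $x\mapsto f(x,q_k)$ is measurable, so Lusin's theorem yields compact sets $K_k\subset \Omega$ with $\abs{\Omega\setminus K_k}<\delta\, 2^{-k-2}$ on which $f(\cdot,q_k)$ is continuous, and on the compact intersection $\tilde{\Omega}_1:=\bigcap_{k\in\NN} K_k$, which satisfies $\abs{\Omega\setminus \tilde{\Omega}_1}<\delta/2$, all the $f(\cdot,q_k)$ are simultaneously continuous. Next, for each $R,k\in\NN$ set
\begin{align*}
  \omega_{R,k}(x):=\sup\bigset{\abs{f(x,\mu)-f(x,\nu)}}{\mu,\nu\in \overline{B_R(0)},\;\abs{\mu-\nu}\leq 1/k}.
\end{align*}
This function is measurable since, by continuity of $f(x,\cdot)$, the supremum can be restricted to a countable dense subset of $\overline{B_R(0)}\times \overline{B_R(0)}$. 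For fixed $x$ and $R$, uniform continuity of the continuous function $f(x,\cdot)$ on the compact ball $\overline{B_R(0)}$ gives $\omega_{R,k}(x)\to 0$ as $k\to\infty$, so Egorov's theorem supplies a compact $K^{(R)}\subset \Omega$ with $\abs{\Omega\setminus K^{(R)}}<\delta\, 2^{-R-2}$ on which this convergence is uniform. Setting $\tilde{\Omega}:=\tilde{\Omega}_1\cap \bigcap_{R\in\NN} K^{(R)}$ gives a compact subset of $\Omega$ with $\abs{\Omega\setminus\tilde{\Omega}}<\delta$.

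To verify joint continuity of $f$ on $\tilde{\Omega}\times \RR^N$, fix $(x_0,\mu_0)\in \tilde{\Omega}\times \RR^N$ and $\eps>0$. Pick $R\in\NN$ with $R\geq \abs{\mu_0}+1$, use the uniform convergence of $\omega_{R,k}$ on $\tilde{\Omega}$ to choose $k$ so large that $\omega_{R,k}(x)<\eps/3$ for every $x\in\tilde{\Omega}$, and pick $q_j$ with $\abs{q_j-\mu_0}<1/(2k)$. Continuity of $f(\cdot,q_j)$ on $\tilde{\Omega}_1\supset \tilde{\Omega}$ yields a neighborhood $V$ of $x_0$ on which $\abs{f(x,q_j)-f(x_0,q_j)}<\eps/3$, and for any $(x,\mu)$ with $x\in V\cap \tilde{\Omega}$ and $\abs{\mu-\mu_0}<1/(2k)$, the triangle inequality combined with two applications of the bound on $\omega_{R,k}$ (once at $x$, once at $x_0$) yields $\abs{f(x,\mu)-f(x_0,\mu_0)}<\eps$.

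The main (minor) obstacle is the measurability of $\omega_{R,k}$, handled by the dense-pair argument above, together with the summable-tail bookkeeping that keeps the total excluded measure below $\delta$; the conceptual point is simply that one needs Lusin for the $x$-variable and Egorov for the $\mu$-variable, threaded together through countable dense families.
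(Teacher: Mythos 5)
The paper does not prove this proposition at all --- it is quoted as a classical result with a citation to Ekeland--Temam --- so there is no in-paper argument to compare against. Your proof is the standard Lusin--Egorov route to Scorza--Dragoni and it is correct: Lusin on a countable dense family of slices $f(\cdot,q_k)$, measurability of the modulus $\omega_{R,k}$ via a countable dense set of pairs, Egorov to make $\omega_{R,k}\to 0$ uniform on a large set, and the $\eps/3$ splitting $f(x,\mu)-f(x,q_j)$, $f(x,q_j)-f(x_0,q_j)$, $f(x_0,q_j)-f(x_0,\mu_0)$, with the radius $R\geq\abs{\mu_0}+1$ chosen so that all points involved stay in $\overline{B_R(0)}$. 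The only points worth flagging are cosmetic: Egorov as usually stated yields a measurable set, so you should invoke inner regularity of Lebesgue measure to pass to a compact subset $K^{(R)}$ (same for keeping $\tilde{\Omega}$ compact), and the pointwise convergence $\omega_{R,k}(x)\to 0$ holds only for a.e.~$x$ (those $x$ for which $f(x,\cdot)$ is continuous), which is all Egorov needs. Neither affects the validity of the argument.
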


\begin{prop}[upper bound]\label{prop:ub}
Assume \eqref{f0}--\eqref{f2}, let $u\in \cU_0$ and let $\eps_n\to 0^+$. Then for every $\delta>0$,
there exists a sequence $(u_n)\subset \cU_{\eps_n}$ 
such that $u_n\rightharpoonup u$ in $L^p(\Omega;\RR^N)$,
and 
\begin{equation}\label{pub-1}
	\lim_{n\to\infty} \int_\Omega f(x,u_n)\,dx \leq 
	\int_\Omega f^{**}(x,u)\,dx+\delta.
\end{equation}
\end{prop}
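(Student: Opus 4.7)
The plan is to reduce the general case to Proposition~\ref{prop:ubpwc} through piecewise-constant approximations of both $u$ and $f$, then to transfer the resulting sequence into $\cU_{\eps_n}$ by combining Lemma~\ref{lem:proj} and Lemma~\ref{lem:rc1}, and finally to diagonalize. Fix $\delta>0$. By Proposition~\ref{prop:scorzadragoni}, select a compact $\tilde\Omega\subset\Omega$ with $|\Omega\setminus\tilde\Omega|$ as small as desired, on which $f$ is continuous and hence uniformly continuous on $\tilde\Omega\times\overline{B_R(0)}$ for every $R>0$. Choose a sufficiently fine partition $\{Q_{h,k}=\omega_h\times J_k\}$ of $\Omega$ and approximate $u$ in $L^p$ by some $u_\#\in\cU_0$ that is constant on each $Q_{h,k}$ with its $N$-th component independent of $k$ (so that $\partial_N u_\#^N=0$ is automatic); simultaneously approximate $f$ by a Carath\'eodory function $f_\#$ that is constant in $x$ on each $Q_{h,k}$, still satisfies \eqref{f1}--\eqref{f2} with possibly slightly enlarged constants, and is uniformly close to $f$ on $\tilde\Omega\times\overline{B_R}$.

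Apply Proposition~\ref{prop:ubpwc} to $u_\#$ and $f_\#$, with $\tau_n$ matched to the sequence $\sigma_n$ furnished by Lemma~\ref{lem:proj}. This produces $v_n,w_n\in L^\infty(\Omega;\RR^N)$ with $v_n,w_n\rightharpoonup 0$ in $L^p$, the pointwise bound $|v_n|,|w_n|\leq K(|u_\#|+1)$, $\div_{\eps_n} v_n=0$, condition \eqref{lubpw-c3} for $w_n$, and $\int f_\#(x,u_\#+v_n+w_n)\,dx \to \int f_\#^{**}(x,u_\#)\,dx$. Since \eqref{lubpw-c3} is precisely the hypothesis of Lemma~\ref{lem:proj} for $w_n\rightharpoonup 0$, that lemma yields $\tilde w_n\in L^p$ with $\div_{\eps_n}\tilde w_n=0$ and $\tilde w_n-w_n\to 0$ strongly in $L^p$. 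Lemma~\ref{lem:rc1} provides $u_{\#,n}\in\cU_{\eps_n}$ with $u_{\#,n}\to u_\#$ strongly in $L^p$, and setting
\[
  u_n:=u_{\#,n}+v_n+\tilde w_n
\]
gives $u_n\in\cU_{\eps_n}$ by linearity and $u_n\rightharpoonup u_\#$ weakly in $L^p$.

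It remains to compare energies. Set $a_n:=u_\#+v_n+w_n$; by \eqref{lubpw-c1} and the boundedness of the piecewise-constant $u_\#$, the sequence $a_n$ is uniformly bounded in $L^\infty$ by some $M$, while $u_n-a_n=(u_{\#,n}-u_\#)+(\tilde w_n-w_n)\to 0$ in $L^p$. Uniform continuity of $f$ on $\tilde\Omega\times\overline{B_M}$, together with the $p$-growth bound on $\Omega\setminus\tilde\Omega$ and on the set where $|u_n-a_n|$ is not small, makes both $|\int f(x,u_n)\,dx-\int f(x,a_n)\,dx|$ and $|\int f(x,a_n)\,dx-\int f_\#(x,a_n)\,dx|$ controllable by the approximation parameters; in the limit of fine approximations, $\int f_\#^{**}(x,u_\#)\,dx \to \int f^{**}(x,u)\,dx$ by the same kind of argument. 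Choosing $\tilde\Omega$, the partition, and $R$ appropriately yields $\limsup_n\int f(x,u_n)\,dx\leq \int f^{**}(x,u)\,dx+\delta$, and a standard diagonal extraction over increasingly fine approximations produces a single sequence in $\cU_{\eps_n}$ converging weakly in $L^p$ to $u$ and meeting the desired upper bound. The main technical obstacle is the uniform-in-$n$ control of the errors arising when one replaces $f$ by $f_\#$ or $u_n$ by $a_n$; this is made possible precisely by the pointwise bound \eqref{lubpw-c1}, which confines $a_n$ to a fixed bounded set of values, on which Scorza-Dragoni's uniform continuity of $f$ can be exploited.
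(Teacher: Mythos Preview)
Your proposal is correct and follows essentially the same strategy as the paper: reduce to Proposition~\ref{prop:ubpwc} via piecewise-constant approximation of $u$ and $f$, use Lemma~\ref{lem:proj} (with $\tau_n$ chosen as the $\sigma_n$ it supplies) to project $w_n$ onto $\div_{\eps_n}$-free fields, use Lemma~\ref{lem:rc1} to make $u_\#$ itself $\div_{\eps_n}$-free, and then diagonalize.

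The organizational difference is minor but worth noting. The paper proceeds in three layers: first it assumes $u$ continuous and $f$ continuous on a large set $\tilde\Omega$ (Step~1), then removes the continuity assumption on $f$ via Scorza--Dragoni and equiintegrability (Step~2), and finally removes the continuity assumption on $u$ via Lemma~\ref{lem:U0density} and diagonalization (Step~3). You collapse Steps~1 and~3 by approximating a general $u\in\cU_0$ directly in $L^p$ by piecewise constants $u_\#$, never passing through continuous $u$. This is legitimate and slightly more economical: since you work with $u_\#$ throughout and only compare $\int f^{**}(x,u_\#)$ with $\int f^{**}(x,u)$ at the end (via $L^p$-continuity of the Nemytskii operator for $f^{**}$), you avoid the paper's need for uniform closeness of $u_\#$ to $u$. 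The price is that the order of choices must be handled with some care---$\tilde\Omega$ must be chosen \emph{after} $u_\#$ (since the required smallness of $|\Omega\setminus\tilde\Omega|$ depends on the $L^\infty$ bound $M$ of $a_n$, which depends on $\|u_\#\|_{L^\infty}$), and the partition may need a further refinement afterwards to make $f_\#$ close to $f$ on $\tilde\Omega\times\overline{B_M}$; your write-up states these choices in a slightly different order but the argument goes through once reordered. The diagonalization at the end also requires a uniform $L^p$ bound on the sequences across approximation levels, which follows from \eqref{lubpw-c1} and $u_\#^{(k)}\to u$ in $L^p$; this is implicit in your sketch and explicit in the paper's Step~3.
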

\begin{rem}
Since \eqref{f2} yields a bound on $\norm{u_n}_{L^p}$ independent of $\delta$, a diagonalization argument similar to the one in the third step of the proof below
shows that the assertion of Prioposition~\ref{prop:ub} stays true even for $\delta=0$.
\end{rem}
\begin{proof}[Proof of Proposition~\ref{prop:ub}]
Using a series of approximations, the assertion is reduced to Proposition~\ref{prop:ub2}. 
Any expression of the form ``$A\approx B$'' below means that $A=B+e$, with an error $e$ whose modulus is controlled 
by a suitable fraction of $\delta$. 

{\bf \underline{Step 1}:} Assume that $u\in \cU_0$ is continuous in $\bar\Omega$ and $f$ is continuous on $\tilde{\Omega}\times\RR^N$,
for some compact $\tilde{\Omega}\subset \bar\Omega$. We claim that in this case, there
exists sequences $(u_n)\subset \cU_{\eps_n}$ and $(r_n)\subset L^p(\Omega;\RR^N)$ such that
$u_n\rightharpoonup 0$ and $r_n\to 0$ in $L^p$,
\begin{equation}\label{pub-1a}
	\lim_{n\to\infty} \int_{\tilde{\Omega}} f(x,u_n-r_n)\,dx=\lim_{n\to\infty} \int_{\tilde{\Omega}} f(x,u_n)\,dx \approx
	\int_{\tilde{\Omega}} f^{**}(x,u)\,dx
\end{equation}
and
\begin{equation}\label{pub-1b}
	\abs{u_n(x)-r_n(x)}\leq (2K+1)(\abs{u(x)}+1)~~\text{for a.e.~$x\in\Omega$},
\end{equation}
where $K$ is the constant in \eqref{pubpwc-4aa} (which, unlike $u_n$ and $r_n$, is independent of $\tilde{\Omega}$).

For the proof, we divide $\Omega$ into sets of the form $Q_{h,k}=\omega_h\times J_k$ and
define associated piecewise constant approximations of $u$ and $f$ as follows:
Let $u_{\#}=(u_{\#}^1,\ldots,u_{\#}^N)$ be given by
$$
	u_{\#}^j(x):=\inf\mysetl{\min\{u^j(x),0\}}{x\in Q_{h,k}}+\sup\mysetl{\max\{u^j(x),0\}}{x\in Q_{h,k}},
$$
for $j=1,\ldots,N$, whence $u_{\#}$ is a piecewise constant function in $\cU_0$ such that $\nabs{u_{\#}^j}\leq \nabs{u^j}$. 
Moreover, for $x\in Q_{h,k}$ let
$$
	f_{\#}(x,\cdot):=f(x_{h,k},\cdot)~~\text{with a fixed}~~
	x_{h,k}\in 
	 \left\{\begin{array}{ll}
		\tilde{\Omega}\cap Q_{h,k}&\text{if $\nabs{\tilde{\Omega}\cap Q_{h,k}}>0$},\\
		Q_{h,k}&\text{otherwise}.\\
	\end{array}\right.
$$
Note that $x_{h,k}$ can always be chosen in such a way that $f_{\#}$ satisfies \eqref{f1} and \eqref{f2} with the original constants.
In the following, let
$$
	S:=\overline{B_R(0)}\subset \RR^N,~~\text{with $R:=(2K+1)(\norm{\tilde{u}}_{L^\infty(\tilde{\Omega};\RR^N)}+1)$},
$$
where $K$ is the constant in \eqref{pubpwc-4aa}.
If the mesh size (the maximal side length of the boxes $Q_{h,k}$) is small enough, we get that
\begin{align}\label{pub-3}
	\max_{x\in \tilde{\Omega}} \abs{u(x)-u_{\#}(x)}\approx 0~~\text{and}~~\max_{x\in \tilde{\Omega},~\mu\in S}
	\abs{f(x,\mu)-f_{\#}(x,\mu)}\approx 0
\end{align}
by the uniform continuity of $u$ and $f$ on compact sets.
With the sequences $v_n$ and $w_n$ of Proposition~\ref{prop:ub2},
using \eqref{pub-3}, \eqref{lubpw-c1} and the uniform continuity of $f$ on $\tilde{\Omega}\times S$, we thus have that
\begin{align*}
	\int_{\tilde{\Omega}} f(x,u+v_n+w_n)\,dx
	&\approx 
	\int_{\tilde{\Omega}} f(x,u_{\#}+v_n+w_n)\,dx 
	\approx 
	\int_{\tilde{\Omega}} f_{\#}(x,u_{\#}+v_n+w_n)\,dx 
\end{align*}
uniformly in $n$. Similarly, \eqref{pub-3} and the uniform continuity of $f^{**}$ on $\tilde{\Omega}\times S$ yield that
\begin{align*}
	\int_{\tilde{\Omega}} f^{**}(x,u)\,dx 
	\approx 
	\int_{\tilde{\Omega}} f^{**}(x,u_{\#})\,dx 
	\approx 
	\int_{\tilde{\Omega}} f^{**}_{\#}(x,u_{\#})\,dx. 
\end{align*}
Together with \eqref{pubpwc-1} (for $f_{\#}$ and $u_{\#}$ instead of $f$ and $u$), this gives
\begin{align}\label{pub-4}
	\lim_{n\to\infty} \int_{\tilde{\Omega}} f(x,u+v_n+w_n)\,dx \approx \int_{\tilde{\Omega}} f^{**}(x,u)\,dx.
\end{align}
Finally, by Lemma~\ref{lem:rc1} and Lemma \ref{lem:proj} applied to $u$ and $w_n$, respectively, there exists a sequence $(r_n)\subset L^p(\Omega;\RR^N)$
such that $r_n\to 0$ in $L^p$ and $\div_{\eps_n} (u+w_n+r_n)=0$ on $\Omega$. By Lebesgue's theorem, \eqref{f0} and \eqref{f1},
we have that
\begin{align}\label{pub-5}
	\lim_{n\to\infty} \int_\Omega f(x,u+v_n+w_n)\,dx = 
	\lim_{n\to\infty} \int_\Omega f(x,u+v_n+w_n+r_n)\,dx, 
\end{align}
also using that $u\in L^p$ is fixed and $(v_n)$, $(w_n)$ are bounded in $L^\infty$.
Combining \eqref{pub-4} and \eqref{pub-5}, we infer \eqref{pub-1a} for $u_n:=u+v_n+w_n+r_n$,
and \eqref{pub-1b} is a consequence of \eqref{pubpwc-4aa} and the fact that $\abs{u_{\#}}\leq \abs{u}$ a.e.~in $\Omega$.

{\bf \underline{Step 2}:} Assume that $u\in \cU_0$ is continuous in $\bar\Omega$.\\
As a consequence of Proposition~\ref{prop:scorzadragoni}, there exists a compact subset $\tilde{\Omega}$ of $\Omega$ such that
$f$ is continuous on $\tilde{\Omega}\times \RR^N$,
and $\bigabs{\Omega\setminus \tilde{\Omega}}$ is small enough such that
\begin{equation}\label{pub-6}
	\int_{\Omega\setminus \tilde{\Omega}} \abs{f^{**}(x,u)}\,dx\approx 0
\end{equation}
and
\begin{equation}\label{pub-7}
	\sup_{v\in V} \int_{\Omega\setminus \tilde{\Omega}} \abs{f(x,v(x))}\,dx\approx 0,
\end{equation}
where $V:=\mysetr{v\in L^p(\Omega;\RR^N)}{\abs{v}\leq (2K+1)(\abs{u}+1)~\text{a.e.}}$. Here, note that 
the set $\mysetl{f(\cdot,v(\cdot))}{v\in V}\subset L^1(\Omega)$ is equiintegrable by \eqref{f1}. 
With the sequences $(u_n)\subset \cU_{\eps_n}$ and $(r_n)\subset L^p(\Omega;\RR^N)$ of Step 1,
we thus have that
$$
	\lim_{n\to\infty}\int_\Omega f(x,u_n-r_n)=\lim_{n\to\infty}\int_\Omega f(x,u_n)\approx \int_\Omega f^{**}(x,u)
$$
due to \eqref{pub-1a}, \eqref{pub-6}, \eqref{pub-1b} and \eqref{pub-7}. 

{\bf \underline{Step 3}:} The general case.\\
By Lemma~\ref{lem:U0density}, there exists a sequence $(\tilde{u}_k)\subset \cU_0\cap C(\bar{\Omega};\RR^N)$ with $\tilde{u}_k\to u$ in $L^p(\Omega)$.
Let $(\tilde{u}_{k,n})\subset \cU_{\eps_n}$ and $(\tilde{r}_{k,n})\subset L^p(\Omega;\RR^N)$ denote the sequences
corresponding to $\tilde{u}_k$ obtained in the previous step. By \eqref{pub-1b}, $\tilde{u}_{k,n}-\tilde{r}_{k,n}$ is bounded in $L^p$, uniformly in $k$ and $n$.
Since the dual of $L^p(\Omega;\RR^N)$ is separable, 
$\tilde{u}_{k,n}-\tilde{r}_{k,n}\rightharpoonup u_k$ in $L^p$ as $n\to\infty$, $\tilde{u}_k\to u$ in $L^p$ as $k\to\infty$,
$\tilde{r}_{k,n}\to 0$ in $L^p$ as $n\to\infty$, and
$$
	\lim_{n\to\infty}\int_\Omega f(x,\tilde{u}_{k,n})\approx \int_\Omega f^{**}(x,\tilde{u}_k)\underset{k\to\infty}{\To}\int_\Omega f^{**}(x,u),
$$
there exist diagonal sequences 
$$
	u_n:=\tilde{u}_{k(n),n}\in\cU_{\eps_n}~~\text{and}~~r_n:=\tilde{r}_{k(n),n}\in L^p(\Omega;\RR^N)
$$
with $k(n)\to \infty$ slow enough such that
$u_n-r_n\rightharpoonup u$ in $L^p$, 
$r_n\to 0$ in $L^p$,
and
$$
	\lim_{n\to\infty}\int_\Omega f(x,u_n)\approx \int_\Omega f^{**}(x,u).
$$

\vspace*{-6ex}
\end{proof}
\begin{rem}\label{rem:matrixcase}
It is natural to ask whether our result also holds for functionals on $\Div$-free matrix fields (i.e., each column is divergence-free).
The approach presented here extends in a straightforward way to fields with values in $\RR^{N\times M}$ for $M\leq N-1$, 
but it does not work for $M\geq N$. Of course, for $M\geq N$, the matrices can have rank $N$, and 
in general, it is no longer clear if
$\Div$-quasiconvexity (S-quasiconvexity in the terminogy of \cite{Pa09ap}, which implies convexity along directions of $\rank\leq N-1$) implies convexity. 
We expect that in this case, the convex envelope in Theorem~\ref{thm:main} has to be replaced by a 
suitable variant of a quasiconvex envelope. We hope to address this in a future work.
\end{rem}

\subsection*{Acknowledgements}
I am grateful to Irene Fonseca, who suggested this topic, and to Martin Kruzik, for useful discussions and remarks on the subject.
Part of the research was carried out during a stay at Carnegie Mellon University in Pittsburgh, made possible
by the financial support of the Deutsche Forschungsgemeinschaft (fellowship KR 3544/1-2) 
and the hospitality of the Center for Nonlinear Analysis (NSF Grants No. DMS-0405343 and DMS-0635983).
\bibliographystyle{plain}
\bibliography{10_bib}
\end{document}